\DeclareSymbolFont{euleroperators}{U}{eur}{m}{n}
\renewcommand{\operator@font}{\mathgroup\symeuleroperators}
\definecolor{refkey}{rgb}{1,0,0}
\definecolor{labelkey}{rgb}{0,0,1}
\newtheorem{thm}[equation]{Theorem}
\newtheorem{lem}[equation]{Lemma}
\newtheorem{prp}[equation]{Proposition}
\theoremstyle{definition}
\newtheorem{dfn}[equation]{Definition}
\theoremstyle{remark}
\newtheorem{rem}[equation]{Remark}
\newcommand{\thmref}[1]{Theorem~\ref{#1}}
\newcommand{\prpref}[1]{Proposition~\ref{#1}}
\newcommand{\lemref}[1]{Lemma~\ref{#1}}
\newcommand{\dfnref}[1]{Definition~\ref{#1}}
\newcommand{\remref}[1]{Remark~\ref{#1}}
\newcommand{\figref}[1]{Figure~\ref{#1}}
\newcommand{\secref}[1]{Section~\ref{#1}}
\renewcommand\a{\alpha}
\renewcommand\b{\beta}
\newcommand\bnd{\operatorname{bnd}}
\newcommand\C{\mathcal C}
\renewcommand\d{\delta}
\newcommand\D{\mathcal D}
\newcommand\e{\varepsilon}
\newcommand\F{\Phi}
\newcommand\f{\varphi}
\newcommand\Fun{\operatorname{Fun}}
\newcommand\fun{\operatorname{fun}}
\newcommand{\G}{\Gamma}
\renewcommand{\gg}{\boldsymbol{g}}
\newcommand{\g}{\gamma}
\newcommand{\gr}{\operatorname{gr}}
\newcommand{\Gr}{\mathcal{G}}
\renewcommand{\k}{\varkappa}
\renewcommand\L{\mathcal{L}}
\newcommand{\la}{\lambda}
\newcommand{\lan}{\langle}
\newcommand{\mapstoto}{\mathop{\,\sim\joinrel\rightsquigarrow\,}}
\newcommand{\ov}{\overline}
\renewcommand\P{\mathbf P}
\newcommand{\p}{\partial}
\newcommand\Q{\mathbf Q}
\newcommand\R{\mathbb R}
\newcommand{\ran}{\rangle}
\newcommand\s{\sigma}
\newcommand{\sgr}{\operatorname{sgr}}
\newcommand{\supp}{\operatorname{supp}}
\newcommand\T{\mathcal T}
\renewcommand\t{\tau}
\renewcommand\th{\theta}
\newcommand{\toto}{\mathop{\,\longrightarrow\,}}
\newcommand\vn{\varnothing}
\newcommand\w{\omega}
\newcommand\wh{\widehat}
\newcommand\wt{\widetilde}
\newcommand\Z{\mathbb Z}
\newcommand\Zd{\Z\left[\frac12\right]}
\newcommand\ZZ{{\mathbb Z_+}}
\begin{document}

\title{Thompson's group $F$ is not Liouville}

\thanks{The author was partially supported by funding from the Canada Research Chairs
program, from NSERC (Canada) and from the European Research Council within European Union Seventh Framework Programme (FP7/2007-2013), ERC grant agreement 257110-RAWG. A part of this work was conducted during the trimester ``Random Walks and Asymptotic Geometry of Groups'' in 2014 at the Institut Henri Poincar\'e, Paris.}

\author{Vadim A. Kaimanovich}

\address{Department of Mathematics and Statistics, University of
Ottawa, 585 King Edward, Ottawa ON, K1N 6N5, Canada}

\email{vkaimano@uottawa.ca, vadim.kaimanovich@gmail.com}

\dedicatory{Dedicated to Wolfgang Woess on the occasion of his 60$^{\,th}$ birthday}

\begin{abstract}
We prove that random walks on Thompson's group~$F$ driven by strictly non-degenerate finitely supported probability measures $\mu$ have a non-trivial Poisson boundary. The proof consists in an explicit construction of two non-trivial $\mu$-boundaries. Both of them are described in terms of the ``canonical'' Schreier graph $\G$ on the dyadic-rational orbit of the canonical action of $F$ on the unit interval (in fact, we consider a natural embedding of $F$ into the group $PLF(\R)$ of piecewise linear homeomorphisms of the real line, and realize $\G$ on the dyadic-rational orbit in $\R$). However, the definitions of these $\mu$-boundaries are quite different (in perfect keeping with the ambivalence concerning amenability of the group~$F$). The first $\mu$-boundary is similar to the boundaries of the lamplighter groups: it consists of $\Z$-valued configurations on $\G$ arising from the stabilization of logarithmic increments of slopes along the sample paths of the random walk. The second $\mu$-boundary is more similar to the boundaries of the groups with hyperbolic properties as it consists of sections (``end fields'') of the end bundle of the graph $\G$, i.e., of the collections of the limit ends of the induced random walk on~$\G$ parameterized by all possible starting points. The latter construction is more general than the former one, and is actually applicable to any group which has a transient Schreier graph with a non-trivial space of ends.
\end{abstract}

\maketitle

\section*{Introduction}

\textbf{A. General setup.}
The \emph{group} $F$ introduced by Richard Thompson in 1965 is the group of the orientation preserving piecewise linear dyadic self-homeo\-mor\-phisms of the closed unit interval $[0,1]$. Arguably, the most important open question about Thompson's group $F$ is the one about its \emph{amenability} (see Cannon -- Floyd \cite{Cannon-Floyd11} for its history). Indeed, due to the plethora of rather unusual properties of this group (described, for instance, by Cannon -- Floyd -- Parry in \cite{Cannon-Floyd-Parry96}) either answer would imply very interesting consequences. This problem has recently attracted a lot of attention, with an impressive number of failed attempts to prove either amenability or non-amenability of the group~$F$.

We are inclined to believe that Thompson's group $F$ is non-amenable, and several recent papers provide circumstantial evidence for that. There are various computational experiments due to Burillo -- Cleary -- \linebreak Wiest \cite{Burillo-Cleary-Wiest07}, Haagerup -- Haagerup -- Ramirez Solano \cite{Haagerup-Haagerup-RamirezSolano15}, Elder -- Rechnitzer -- van Rensburg \cite{Elder-Rechnitzer-vanRensburg15}. Besides that, Moore \cite{Moore13} proved that if the group $F$ is amenable, then its F{\o}lner function must be growing very fast (note, however, that, as it was shown by Erschler \cite{Erschler06a}, even the groups of intermediate growth may have the F{\o}lner function growing faster than any given function).

The solution of a conjecture of Furstenberg \cite{Furstenberg73} by Rosenblatt \cite{Rosenblatt81} and by Kaimanovich -- Vershik \cite{Kaimanovich-Vershik83} provides the following characterization of amenability in terms of the \emph{Liouville property}. A~countable group is amenable if and only if it carries a non-degenerate \emph{Liouville random walk} (i.e., such that its \emph{Poisson boundary} is trivial). Therefore, a possible approach to proving non-amenability of a given group consists in showing that there are no non-degenerate Liouville random walks on it. Here we make the first step in this direction for Thompson's group $F$ by showing that \emph{the random walk $(F,\mu)$ is non-Liouville for any strictly non-degenerate finitely supported measure $\mu$}. In particular, \emph{the simple random walk on the Cayley graph of $F$ determined by any finite generating set is non-Liouville.}

Independently of the amenability issue, Thompson's group $F$ was among the first examples of finitely generated groups with an intermediate (between 0 and 1) value of the \emph{Hilbert space compression} which were exhibited by Arzhantseva -- Guba -- Sapir \cite{Arzhantseva-Guba-Sapir06}. They proved that for the group $F$ this value is equal to $1/2$ and asked about existence of a compression function strictly better than the square root \cite[Question~1.4]{Arzhantseva-Guba-Sapir06}. However, as it has been pointed out by Gournay \cite{Gournay14}, in view of the results of Naor -- Peres \cite{Naor-Peres08}, if such a function exists, then the group must be Liouville. In combination with our result, it allowed Gournay to answer the above question of Arzhantseva -- Guba -- Sapir in the negative: the best Hilbertian equivariant compression function for Thompson's group $F$ is (up to constants) the square root function.

\medskip

\textbf{B. Main results.}
The overwhelming majority of the currently known examples of an explicit non-trivial behavior at infinity for random walks on discrete groups falls into one of the following two classes. The first class consists of the examples, for which this behaviour is due to some kind of \emph{boundary convergence} usually related to more or less pronounced manifestations of hyperbolicity (the most representative example being, of course, the \emph{word hyperbolic groups}), see the papers by Kaimanovich -- Vershik \cite{Kaimanovich-Vershik83}, Kaimanovich \cite{Kaimanovich85, Kaimanovich89, Kaimanovich94a, Kaimanovich00a}, Kaimanovich -- Masur \cite{Kaimanovich-Masur96}, Karlsson -- Margulis  \cite{Karlsson-Margulis99}, Karlsson -- Ledrappier \cite{Karlsson-Ledrappier07}, Maher -- Tiozzo \cite{Maher-Tiozzo14p}. For the examples from the second class a non-trivial behavior at infinity is provided by \emph{pointwise stabilization of random configurations} in a certain way associated with the sample paths of the random walk (this situation is exemplified by the \emph{lamplighter groups}), see Kaimanovich -- Vershik \cite{Kaimanovich-Vershik83}, Kaimanovich \cite{Kaimanovich83a, Kaimanovich91}, Erschler \cite{Erschler04a, Erschler04, Erschler11a}, Karlsson --Woess \cite{Karlsson-Woess07}, Sava \cite{Sava10}, Lyons -- Peres \cite{Lyons-Peres15}, Juschenko -- Matte Bon -- Monod -- de la Salle \cite{Juschenko-MatteBon-Monod-delaSalle15p}.

In the case of Thompson's group $F$, true to its ambivalent nature, we actually construct \emph{asymptotic behaviours \mbox{($\mu$-boundaries)} of both types}. Note that although the descriptions of these $\mu$-boundaries are quite different, currently we do not know anything about their mutual position in the lattice of all $\mu$-boundaries determined by a fixed step distribution $\mu$. In particular, \emph{a priori} it is not excluded that both these $\mu$-boundaries might in fact coincide with the full Poisson boundary.

Our main tool is the ``canonical'' \emph{Schreier graph} $\G$ of the group~$F$ (endowed with the standard generators) on the dyadic-rational orbit of its canonical action on the unit interval. In principle one could argue just in terms of this action. However, it turns out to be much more convenient to ``change the coordinates'' and to realize Thompson's group $F$ as a subgroup $\wt F\cong F$ of the \emph{group $PLF(\R)$ of piecewise linear homeomorphisms of the real line}. [Actually, the geometry of the graph $\G$ completely described by Savchuk \cite{Savchuk10} is really begging for this coordinate change.] Then one of the generators of $F$ becomes just the translation $\g\mapsto\g-1$ on $\R$. The dyadic-rational orbit in the unit interval becomes the dyadic-rational orbit in $\R$, so that we can identify the vertex set of $\G$ with the dyadic-rational line $\Zd\subset\R$.

As we have already said, we construct two non-trivial $\mu$-boundaries of the group $F$. The first one is inspired by an analogy between Thompson's group $F\cong\wt F$ and the \emph{lamplighter groups}. Let $\fun(\G,\Z)$ (respectively, $\Fun(\G,\Z)$) denote the additive group of finitely supported (respectively, of all) \emph{$\Z$-valued configurations on the graph} $\G$. We assign to any element $g$ of the group $\wt F\subset PLF(\R)$ a configuration $\C_g\in\fun(\G,\Z)$ on $\G\cong\Zd$. The value of $C_g$ at a point $\g$ is the \emph{logarithmic increment of the slope} of $g$ at~$\g$ (so that the support of~$\C_g$ is precisely the discontinuity set of the derivative of $g$). Or, in a somewhat different terminology, the support of the configuration $\C_g$ is the set of the \emph{break points} of $g$, and its values are the logarithms of the \emph{jumps} of $g$.

By the chain rule the sequence of the configurations $\C_{g_n}$ along a sample path $(g_n)$ of the random walk $(\wt F,\mu)$ satisfies a simple recursive relation. This relation is completely analogous to that for the random walks on the lamplighter groups. In precisely the same way as with the lamplighter groups \cite{Kaimanovich-Vershik83,Kaimanovich83a}, if the measure $\mu$ is finitely supported, then the transience of the induced random walk on~$\G$ implies that the sequence $\C_{g_n}$ almost surely converges to a random limit configuration $\C_\infty\in\Fun(\G,\Z)$. Then it is not hard to verify that the limit configuration $\C_\infty$ can not be the same for all sample paths. Therefore, the space $\Fun(\G,\Z)$ endowed with the arising hitting distribution is a non-trivial $\mu$-boundary.

This argument (once again, precisely in the same way as for the lamplighter groups) is hinged on the \emph{transience} of the induced random walk on $\G$. We establish it by showing that for the simple random walk on $\G\cong\Zd$ there is a ``drift'' which forces the 2-adic norm to go to infinity along the sample paths (this is the original argument we referred to in \cite{Kaimanovich04p}). The classical comparison technique then leads to the transience of all random walks on $\G$ driven by strictly non-degenerate probability measures $\mu$ on $F\cong\wt F$.

Alternatively, the transience of the simple random walk on $\G$ can be directly obtained from the explicit description of the geometry of the graph $\G$ due to Savchuk \cite{Savchuk10}. Indeed, 
after removing from $\G$ a countable set of rays isomorphic to $\Z_+$ the remaining \emph{skeleton} $\ov\G$ is a tree roughly isometric to the standard binary tree. This fact readily implies that $\G$ is transient.

In a recent preprint \cite{Mishchenko15} Mishchenko gives yet another proof of the transience of the Schreier graph $\G$, which is based on an explicit Dirichlet norm estimate. He then notices that due to a specific geometry of $\G$ this transience implies existence of a non-trivial behaviour at infinity for the simple random walk on $\G$. Since it is induced by the simple random walk on Thompson's group $F$, the latter also has a non-trivial behaviour at inifnity. This argument gives a different proof of the absence of the Liouville property for the group $F$.

Mishchenko settles just for showing non-triviality of the Poisson boundary by exhibiting a non-trivial finite partition of it. However, his observation can be developed much further. Indeed, a transient finite range random walk on any graph necessarily converges in the end compactification. If the graph is endowed with a transitive group action which commutes with the transition operator of the random walk, then the dependence of this limit end on the starting point is equivariant, so that it is the same boundary behaviour of the random walk on the group that is exhibited independently of the starting point in the graph. However, this is not the case in our situation, and here, in order to obtain a $\mu$-boundary (i.e., an \emph{equivariant} quotient of the Poisson boundary), one has to consider the collections of the limit ends of the induced random walk on $\G$ parameterized by \emph{all} possible starting points. In other words, the resulting $\mu$-boundary is realized on the \emph{space of sections of the end bundle}
$$
\G\times\p\G\to\G \;, \qquad (\g,\w)\to\g
$$
over the graph $\G$ (here $\p\G$ is the space of ends of $\G$). One can consider these sections as \emph{``end fields''} on $\G$, or, in yet another termonilogy, as elements of the \emph{space $\Fun(\G,\p\G)\cong(\p\G)^\G$ of $\p\G$-valued configurations} on $\G$. By using certain self-similar features of the graph $\G$ (although its group of automorphisms is trivial, it has a lot of \emph{partial isomorphisms} between its subsets), it is easy to check that this $\mu$-boundary is non-trivial.

Our construction of a $\mu$-boundary of the group $F$ on the configuration space $\Fun(\G,\Z)$
heavily used the specific features of this group. On the contrary, the above construction of a $\mu$-boundary on the space of sections $\Fun(\G,\p\G)$ of the end bundle over the Schreier graph $\G$ is much more general. It produces a non-trivial $\mu$ boundary for a strictly non-degenerate finitely supported step distribution $\mu$ on an \emph{arbitrary} finitely generated group $G$ whenever there exists a Schreier graph $\G$ of this group with the following two properties:
\begin{itemize}
\item[(i)]
the graph $\G$ is transient with respect to the simple random walk;
\item[(ii)]
for the induced random walk $(\G,\mu)$ its harmonic (hitting) distributions on the space of ends $\p\G$ are not one-point measures.
\end{itemize}

Returning to Thompson's group $F\cong\wt F$, let us notice that the space of ends of its canonical Schreier graph $\G$ splits into a disjoint union
$$
\p\G = \p\ov\G \cup \p_-\G \cup \p_+\G \;.
$$
Here $\p\ov\G$ is the space of ends of the skeleton $\ov\G$, which can be identified with the space $\Z_2^{(1)}=\Z_2\setminus 2\Z_2$ of 2-adic integers of norm 1. The sets $\p_-\G,\p_+\G$ are countably infinite and correspond to the rays attached to the respective subsets of the skeleton. The components of the above decomposition, on which the hitting measures are actually concentrated, can be further described in terms of the barycentres of the images of the measure $\mu$ under its homomorphisms to $\Z$. In particular, in the centered case the hitting measures are supported by $\Fun(\G,\p\ov\G)$.

\medskip

\textbf{C. Structure of the paper.}
The paper has the following structure. In \secref{sec:prelim} we remind the background definitions concerning the Poisson boundary of a random walk on a discrete group, the Liouville property and its links with amenability. We also remind how one obtains a non-trivial behaviour at infinity for the lamplighter groups, as this was our source of inspiration for constructing one of the $\mu$-boundaries of Thompson's group (the one realized on the configuration space $\Fun(\G,\Z)$).

Further, in \secref{s:f} we introduce Thompson's group $F$ and describe its realization as a subgroup $\wt F\cong F$ of the group $PLF(\R)$ of piecewise linear homeomorphisms of the real line. 

In \secref{s:sch} we establish the transience of random walks on the Schreier graph~$\G$. First we do that for the simple random walk (\thmref{thm:transimple}), and then, by using a general comparison criterion (\thmref{thm:r}), for the random walk determined by any strictly non-degenerate step distribution (\thmref{thm:transall}).

In \secref{s:config} we construct a non-trivial $\mu$-boundary of the random walks on Thompson's group $F\cong\wt F$, which is determined by stabilizing $\Z$-valued configurations on the Schreier graph $\G$. It is realized on the configuration space $\Fun(\G,\Z)$ (\thmref{thm:conv}).

In \secref{s:geom} we establish several geometric properties of the Schreier graph $\G$ and its space of ends.

These properties are then used in \secref{s:ends} in order to construct another non-trivial $\mu$-boundary of random walks on Thompson's group $F\cong\wt F$. It is determined by the convergence of the induced random walk on the Schreier graph $\G$ to its ends, and it is realized on the space of sections $\Fun(\G,\p\G)$ of the end bundle over $\G$ (\thmref{thm:conv2}). In \thmref{thm:gen} we generalize this result to arbitrary groups which admit a transient Schreier graph with a non-trivial space of ends. In \thmref{thm:conv2a} we further specify the components of the space $\Fun(\G,\p\G)$, which actually support the $\mu$-boundary constructed in \thmref{thm:conv2}.

Finally, \secref{s:concl} contains a discussion of possible future developments and ramifications.

\medskip

\textbf{D. Historical comments.}
The main result of this paper (absence of the Liouville property for Thompson's group $F$) was presented in the talks ``Boundary behaviour of the Thompson group'' at the conference ``Combinatorial, Geometric, and Dynamical Aspects of Infinite Groups'' (1-6 June 2003, Gaeta, Italy) and at the special session ``Probabilistic and Asymptotic Aspects of Group Theory'' of the AMS meeting in Athens, Ohio (26-27 March 2004), and the slides of these talks \cite{Kaimanovich04p} were circulated at that time. However, it took quite a while to complete the present version, and in the meantime some of its ideas and tools were independently developed by other authors. The geometry of the Schreier graphs of the group $F$ corresponding to its canonical action on the unit interval was completely described by Savchuk \cite{Savchuk10,Savchuk15}; Mishchenko \cite{Mishchenko15} published a new proof of the absence of the Liouville property for the group $F$ based on the analysis of Savchuk.

\medskip

\textbf{E. Acknowledements.}
I would like to thank the editors of the present volume Tullio Cecche\-ri\-ni-Silberstein, Maura Salvatori and Ecaterina Sava-Huss for their infinite patience, understanding and co\-ope\-ration during the preparation of this article. I am grateful to the anonymous referee for a very interesting and competent report. Anna Erschler and Yair Hartman made a number of valuable comments and suggestions. Finally, last but not least, my thanks go to Wolfgang Woess, who is at the origin of this volume, and with whose friendship and wisdom I have been honoured for many years.

\section{Preliminaries} \label{sec:prelim}

\subsection{Random walks on groups} \label{sec:rw}

The (right) \textsf{random walk} $(G,\mu)$ on a \textsf{countable group} $G$ determined by a probability measure (the \textsf{step distribution}) $\mu$ on $G$ is the Markov chain on $G$ with the \textsf{transitions}
$$
g \mapstoto^{\mu(h)} gh \;, \qquad g\in G \;,
$$
i.e., its \textsf{transition probabilities} $\pi_g$ are the translates
$$
\pi_g = g\mu \;, \qquad g\in G \;,
$$
of the measure $\mu$. Any \textsf{initial distribution} $\th$ on $G$ determines the associated \textsf{Markov measure} $\P_\th$ on the \textsf{space $G^\ZZ$} of \textsf{sample paths} $\gg=(g_0,g_1,\dots)$.

By $\P=\P_{\d_e}$ we denote the probability measure on the path space whose initial distribution is the \textsf{point measure} $\d_e$ concentrated on the \textsf{identity} $e$ of the group $G$. For the random walk issued from the identity of the group (so that $g_0=e$), its position $g_n$ at time $n$ is the product $g_n=h_1 h_2\dots h_n$ of $n$ independent identically $\mu$-distributed \textsf{increments}~$h_i$, and the distribution of $g_n$ (the $n$-th marginal distribution of the measure $\P$) is the $n$-fold \emph{convolution} of the measure~$\mu$.

\subsection{Poisson boundary}

The measure $\P_\#$, whose initial distribution is the \textsf{counting measure} $\#$ on the group $G$, is invariant with respect to the \textsf{time shift} $\T$ on the path space. The \textsf{Poisson boundary} $\p_\mu G$ of the random walk $(G,\mu)$ is the \emph{space of ergodic components} of the time shift~$\T$ on the space $(G^{\Z_+},\P_\#)$, and any initial probability distribution~$\th$ on $G$ determines the corresponding \textsf{harmonic measure} $\nu_\th$ on $\p_\mu G$ as the image of the measure $\P_\th\prec\P_\#$ on the path space under the \textsf{quotient map} $G^{\Z_+}\to\p_\mu G$. We emphasize that \emph{the Poisson boundary is defined in the measure category only}. Below, when talking about the Poisson boundary $\p_\mu G$, we shall always (unless otherwise specified) endow it with the \textsf{measure $\nu=\nu_{\d_e}$}, which is the harmonic measure of the initial distribution $\d_e$.

The Poisson boundary is equipped with the natural (left) \textsf{action} of the group $G$ induced by the coordinate-wise left translations on the path space, and for an arbitrary initial distribution~$\th$
$$
\nu_\th = \th * \nu = \sum_g \th(g) \, g\nu \;.
$$
Although the harmonic measure $\nu$ need not be quasi-invariant (if the \textsf{semigroup $\sgr\mu$} generated by the \textsf{support} $\supp\mu$ of the measure $\mu$ is smaller than $G$), it is $\mu$-\textsf{stationary} with respect to this action in the sense that
$$
\mu * \nu = \sum_g \mu(g) \, g\nu = \nu \;.
$$

\subsection{Poisson formula}

A function $f$ on $G$ is called \textsf{$\mu$-harmonic} if it is preserved by the \textsf{Markov operator of the random walk} $(G,\mu)$
$$
P_\mu f(g) = \sum_h \mu(h) f(gh) \;,
$$
i.e., if $P_\mu f = f$. The \textsf{Poisson formula}
\begin{equation} \label{eq:P}
f(g) = \lan \wh f, g\nu \ran \;, \qquad g\in G \;,
\end{equation}
establishes an isometric isomorphism between the \textsf{space} $H^\infty(G,\mu)$ of bounded $\mu$-harmonic functions $f$ on the group $G$ and the $L^\infty$ space on the Poisson boundary $\p_\mu$ with respect to the quotient measure class determined by the measure $\P_\#$. It consists in assigning to a boundary function $\wh f$ the function $f$ on $G$ whose value at a point $g\in G$ is the result of the integration of the function $\wh f$ against the translate $g\nu$ of the harmonic measure $\nu$.

If $g$ in the Poisson formula \eqref{eq:P} is only allowed to take values in the semigroup $\sgr\mu$, then \eqref{eq:P} becomes an isometric isomorphism between the space of bounded $\mu$-harmonic functions on $\sgr\mu$ and $L^\infty(\p_\mu G,\nu)$. This property uniquely characterizes the Poisson boundary. Namely, any $G$-space $(B,\la)$, for which formula \eqref{eq:P} is an isometric isomorphism between the spaces $H^\infty(\sgr\mu,\mu)$ and $L^\infty(B,\la)$, is isomorphic to the Poisson boundary $(\p_\mu G,\nu)$ in the category of measure $G$-spaces. See Kaimanovich -- Vershik \cite{Kaimanovich-Vershik83}, Kaimanovich \cite{Kaimanovich92, Kaimanovich00a}, Furman \cite{Furman02}, Erschler \cite{Erschler10} for more background on the Poisson boundary.

\subsection{Stability of the Liouville property}

A random walk $(G,\mu)$ is called \textsf{Liouville} if the harmonic measure $\nu$ is a point measure. The reason for this terminology is that in view of the Poisson formula \eqref{eq:P} this property means that there are no non-constant bounded $\mu$-harmonic functions on the semigroup $\sgr\mu$. One can easily see that the latter property is actually equivalent to the absence of non-constant bounded $\mu$-harmonic functions on the whole \textsf{group} $\gr\mu$ generated by $\supp\mu$ as well.

There are numerous examples showing that one can have both Liouville and non-Liouville random walks on the same group (e.g., see Kaimanovich -- Vershik \cite{Kaimanovich-Vershik83}, Kaimanovich \cite{Kaimanovich83a}, Erschler \cite{Erschler04, Erschler04a}, Bartholdi -- Erschler \cite{Bartholdi-Erschler11}), and it is still not clear how the Liouville property for random walks on the same group depends on the step distribution $\mu$. 

\vfill\eject

The main open problem here is
\begin{quote}
\emph{whether all finitely supported symmetric measures $\mu$ with $\gr\mu=G$ on the same finitely generated group $G$ are either Liouville or non-Liouville simultaneously},
\end{quote}
or, in a somewhat different form,
\begin{quote}
\emph{whether the Liouville property is stable with respect to rough isometries}.
\end{quote}

In the absence of group invariance, for general graphs or Riemannian manifolds, the corresponding couterexamples to the stability of the Liouville property were first constructed by Terry Lyons \cite{Lyons87}. We believe that such counterexamples should exist in the group setup as well. However, the pool of groups, for which the Liouville property has been studied, still remains quite limited in spite of some significant recent progress, see Bartholdi -- Virag \cite{Bartholdi-Virag05}, Brieussel \cite{Brieussel09}, Bartholdi -- Kaimanovich -- Nekrashevych \cite{Bartholdi-Kaimanovich-Nekrashevych10}, Amir -- Angel -- Vir\'ag \cite{Amir-Angel-Virag13}, Amir -- Angel -- Matte Bon -- Vir\'ag \cite{Amir-Angel-MatteBon-Virag13p}, Matte Bon \cite{MatteBon14}, Kotowski -- Vir\'ag \cite{Kotowski-Virag15}, Juschenko -- Matte Bon -- Monod -- de la Salle \cite{Juschenko-MatteBon-Monod-delaSalle15p}.

\subsection{Liouville groups}

Because of the above there are different ways of calling a group $G$ \textsf{Liouville} depending on which class of step distributions $\mu$ on $G$ one considers (e.g., all measures, just finitely supported measures, finitely supported symmetric measures, etc.). According to one popular definition a group $G$ is called Liouville if the random walk $(G,\mu)$ is Liouville for any (possibly degenerate) finitely supported symmetric probability measure $\mu$ on $G$, see Matte Bon \cite{MatteBon14}. Sometimes one also talks about the Liouville property of a group $G$ with respect to a finite symmetric generating set $K$ (in which case one takes for $\mu$ the measure equidistributed on $K$), see Gournay \cite{Gournay14}.

\subsection{Amenability and the Liouville property}

Liouville groups (no matter which definition one takes) are always amenable, which goes back to Furstenberg \cite{Furstenberg73}. The reason for that is that a random walk $(G,\mu)$ is Liouville if and only if the sequence of Cesaro averages of the convolution powers of $\mu$ is asymptotically invariant with respect to the left action of the group $\gr\mu$ on itself, see Kai\-ma\-no\-vich -- Vershik \cite{Kaimanovich-Vershik83}. Therefore, the Liouville property of the random walk $(G,\mu)$ implies amenability of the group $\gr\mu$ in a very constructive way. Conversely, as it was first conjectured by Furstenberg in \cite{Furstenberg73}, and proved by Rosenblatt \cite{Rosenblatt81} and Kaimanovich -- Vershik \cite{Kaimanovich-Vershik83}, any amenable group $G$ carries a symmetric measure $\mu$ with $\supp\mu=G$ such that the random walk $(G,\mu)$ is Liouville. Note that there are amenable groups $G$, for which such a measure $\mu$ can not be chosen finitely supported, as it was shown by Kaimanovich \cite{Kaimanovich83a}, or even to have finite entropy, as it was shown by Erschler \cite{Erschler04}.

Therefore, a possible approach to proving non-amenability of a given group $G$ consists in showing that it carries no Liouville random walks with \textsf{non-degenerate} step distributions $\mu$ (i.e., such that $\gr\mu=G$). Actually, in view of the above result of Rosenblatt and Kai\-ma\-no\-vich -- \linebreak Vershik, it is enough to consider just the measures with $\sgr\mu=G$ (we shall call such measures \textsf{strictly non-degenerate}; sometimes they are also called \textsf{adapted}), or even just with $\supp\mu=G$. Here me make the first step in this direction for Thompson's group $F$ by showing that random walks on $F$ with finitely supported strictly non-degenerate step distributions are non-Liouville.

\subsection{Lamplighter groups} \label{sec:lamp}

Our approach to the Liouville property for the group $F$ consists in using the transience of the induced random walk on an auxiliary homogeneous space $\G$ of the group to prove that certain configurations on $\G$ associated with the elements of $F$ stabilize along the sample paths of the random walk. It is precisely this idea that was first used for exhibiting a non-trivial boundary behavior for random walks on \emph{lamplighter groups}, see Kaimanovich -- Vershik \cite{Kaimanovich-Vershik83} and Kaimanovich \cite{Kaimanovich83a}, and, to begin with, we shall outline it in the lamplighter context.

\begin{dfn}
The \textsf{lamplighter group} $\L(G)$ \textsf{over a base group} $G$ is the semi-direct product of the group $G$ and the additive group $\fun(G,\Z/2\Z)$ of finitely supported $\Z/2\Z$-valued configurations on $G$ endowed with the action of $G$ by translations.
\end{dfn}

\begin{rem}
In the context of functional and stochastic analysis these groups were first introduced by Vershik and the author \cite{Kaimanovich-Vershik83} under the name of the \emph{groups of dynamical configurations}. However, this term did not stick, and the current generally accepted standard is to call them \emph{lamplighter groups}. In the algebraic language $\L(G)$ is the \emph{wreath product} with the \emph{active group} $G$ and the \emph{passive group} $\Z/2\Z$. Since our purpose is just to outline the general idea, we are not discussing here more general wreath products (for which see, for instance, Kaimanovich \cite{Kaimanovich91} and Erschler \cite{Erschler04a, Erschler06}).
\end{rem}

As a set, $\L(G)$ is the usual product of $G$ and $\fun(G,\Z/2\Z)$, so that
$$
\L(G) = \{(g,\F): g\in G,\;\F\in\fun(G,\Z/2\Z) \} \;.
$$
However, the group operation in $\L(G)$ is ``skewed'' by using the left action
of $G$ on $\fun(G,\Z/2\Z)$ by the group automorphisms
\begin{equation} \label{eq:act0}
\mathbf{T}^g \F (h) = \F(g^{-1} h)
\end{equation}
induced by the left action of the group $G$ on itself by translations, so that the group multiplication in $\L(G)$ is
\begin{equation} \label{eq:mult}
(g_1,\F_1) (g_2,\F_2) = ( g_1 g_2, \F_1 + \mathbf{T}^{g_1}\F_2 ) \;.
\end{equation}
The identity of $\L(G)$ is $(e,\vn)$, where $e$ is the identity of $G$, and $\vn$ (the \textsf{empty configuration} defined as $\vn(h)=0$ for any $h\in G$) is the identity of $\fun(G,\Z/2\Z)$.

\subsection{Stabilization of configurations} \label{sec:stablamp}

Given a probability measure~$\mu$ on the group $\L(G)$, the positions $(g_n,\F_n)$ of the corresponding random walk at two consecutive time moments are related as
$$
( g_{n+1},\F_{n+1} ) = ( g_n,\F_n ) ( h_{n+1},\f_{n+1} ) \;,
$$
where $(h_i,\f_i)$ are the increments of the random walk. Therefore, by formula \eqref{eq:mult},
\begin{equation} \label{eq:llproduct}
\begin{cases}
g_{n+1} = g_n h_{n+1} \;, \\
\F_{n+1} = \F_n + \mathbf{T}^{g_n} \f_{n+1} \;.
\end{cases}
\end{equation}
In particular, the first component $g_n\in G$, i.e., the image of the random walk $(\L(G),\mu)$ under the group homomorphism
$$
\L(G) \to G \;, \qquad (g,\F)\mapsto g \;,
$$
performs the \textsf{quotient random walk} on $G$ driven by the image $\mu'$ of the measure $\mu$ under the homomorphism $\L(G)\to G$.

If the quotient random walk $(G,\mu')$ on $G$ is \textsf{transient} (i.e., its sample paths almost surely go to infinity on $G$), and the support of the measure $\mu$ is finite (which implies that there is a finite set $A\subset G$ which contains the supports of all the increments $\f_n$), then formula \eqref{eq:llproduct} implies that the configurations $\F_n$ almost surely stabilize as $n\to\infty$. It means that there exists a pointwise random limit
$$
\F_\infty = \lim_{n\to\infty} \F_n = \f_1 + \mathbf{T}^{g_1}\f_2 + \mathbf{T}^{g_2}\f_3 + \dots \;,
$$
which belongs to the space $\Fun(G,\Z/2\Z)$ of all (not necesssrily finitely supported) $\Z/2\Z$-valued configurations on $G$.

It is easy to see that under natural conditions on the measure $\mu$ the limit configuration $\F_\infty$ can not be the same for a.e.\ sample path, and therefore the Poisson boundary of the random walk $(\L(G),\mu)$ is non-trivial. This is the case if the measure $\mu$ is non-degenerate in $\L(G)$, but actually this assumption can be siginificantly weakened, see Kaimanovich \cite{Kaimanovich91}. The map, which assigns to a sample path $(g_n,\F_n)$ the associated limit configuration $\F_\infty\in\Fun(G,\Z/2\Z)$, is obviously $\L(G)$-equivariant (with respect to the action on $\Fun(G,\Z/2\Z)$ determined by the ``configuration component'' of formula \eqref{eq:mult}). Therefore, 
the configuration space $\Fun(G,\Z/2\Z)$ endowed with the resulting hitting distribution is a \textsf{$\mu$-boundary} (an equivariant quotient of the Poisson boundary).

\vfill\eject

\section{Thompson's group $F$ as a subgroup of $PLF(\R)$} \label{s:f}

\subsection{The group $F$ and its generators} \label{sec:genF}

\begin{dfn} \label{dfn:thomp}
The \textsf{Thompson group} $F$ is the group of the orientation preserving piecewise linear  self-homeo\-mor\-phisms $g$ of the closed unit interval $[0,1]$ that are differentiable except for finitely many \textsf{break points}~$t_i$, which are dyadic rational numbers, and such that the \textsf{slopes}~$a_i$ are integer powers of 2:
$$
\begin{aligned}
g(t) = a_i t + b_i &\quad\text{on}\quad  [t_i,t_{i+1}]\;, \qquad \\
&t_i= \frac{k_i}{2^{m_i}} \in {\textstyle \Zd}\;,\quad a_i=2^{n_i}\;,\quad b_i\in{\textstyle \Zd} \;,
\end{aligned}
$$
see \figref{fig:graph}.
\end{dfn}

\begin{center}
\psfrag{x}[][]{$t_i$}
 \psfrag{y}[][]{$t_{i+1}$}
 \psfrag{b}[][]{$0$}
 \psfrag{e}[][]{$1$}
\includegraphics{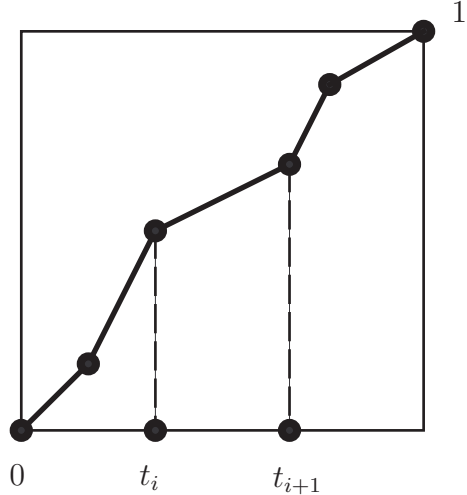}
\captionof{figure}{The graph of an element of Thompson's group $F$.} \label{fig:graph}
\end{center}

\bigskip

We refer to Cannon -- Floyd -- Parry \cite{Cannon-Floyd-Parry96} for the general background on the group $F$. In particular, it is finitely generated with the \textsf{generators}
$$
A(t) =
\begin{cases}
\frac{t}2   \;,  & 0\le t\le \frac12 \\
t-\frac14 \;,  & \frac12 \le t \le \frac34 \\
2t-1  \;,  & \frac34 \le t \le 1
\end{cases}
$$
and
$$
B(t) =
\begin{cases}
t   \;,  & 0\le t\le \frac12 \\
\frac{t}2+\frac14 \;,  & \frac12 \le t \le \frac34 \\
t-\frac18  \;,  & \frac34 \le t \le \frac78 \\
2t-1 \;, & \frac78 \le t \le 1
\end{cases}  \qquad,
$$
see \figref{fig:gen} (where the arrow indicates that the graph of $B$ restricted to the square $[\frac12,1]\times [\frac12,1]$ is precisely the graph of $A$ rescaled by a factor of 2).
\begin{figure}[h]
     \psfrag{o}[][l]{$0$}
     \psfrag{a}[][l]{$\frac14$}
     \psfrag{b}[][l]{$\frac12$}
     \psfrag{f}[][l]{$\frac58$}
     \psfrag{c}[][l]{$\frac34$}
     \psfrag{e}[][l]{$\frac78$}
     \psfrag{i}[][l]{$1$}
     \psfrag{A}[][l]{$A$}
     \psfrag{B}[][l]{$B$}
          \includegraphics{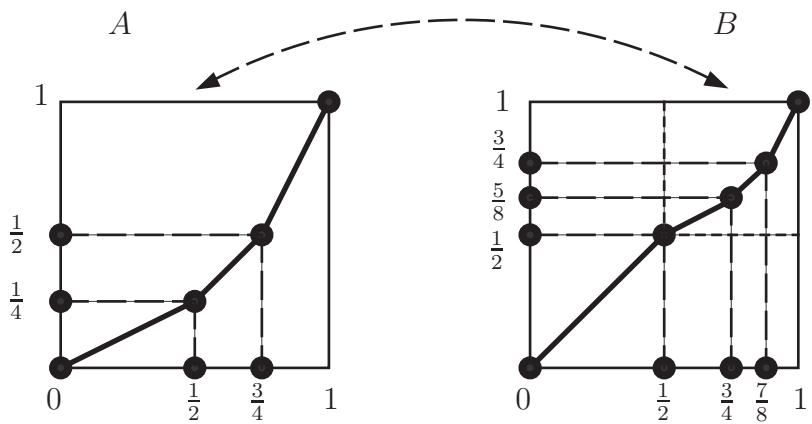}
\caption{The standard generators of Thompson's group $F$.} \label{fig:gen}
\end{figure}

The group $F$ is finitely presented, and it is determined by the relators 
$$
[AB^{-1},A^{-1}BA] \;,\qquad [AB^{-1},A^{-2}BA^2] \;,
$$
where $[x,y]=xyx^{-1}y^{-1}$ denotes the usual group theory commutator. The group $F$ is also often described by the infinite presentation
$$
\lan x_0,x_1,x_2,\dots | x_k^{-1} x_n x_k = x_{n+1} \;\text{for all}\;k<n \ran \;,
$$
which can be obtained from the previous one by putting $x_0=A$ and $x_n=A^{1-n}BA^{n-1}$ for $n>0$.

\subsection{Homomorphisms to $\Z$} \label{sec:f}

The abelianization $\Z^2$ of the group $F$ is freely generated by the images of the above generators $A$ and $B$. By $\chi_a$ and $\chi_b$ we denote the corresponding \textsf{homomorphisms of $F$ to $\Z$}, i.e., the homomorphisms $\chi_a$ and $\chi_b$ are defined by putting, respectively,
$$
\chi_a(A)=1 \;, \qquad \chi_a(B)=0 \;,
$$
and
$$
\chi_b(A)=0 \;, \qquad \chi_b(B)=1 \;.
$$
Geometrically, $-\chi_a(g)$ (respectively, $\chi_a(g)+\chi_b(g)$) is the base 2 logarithm of the slope of the graph of $g$ at the endpoint 0 (respectively, at the endpoint 1).

\subsection{Change of variables} \label{sec:change}

The group $F$ can also be realized as a subgroup of the \textsf{group of piecewise linear homeomorphisms of the real line} $PLF(\R)$ introduced by Brin -- Squier \cite{Brin-Squier85} (also see Haagerup -- Picioroaga \cite[Remark 2.5]{Haagerup-Picioroaga11}). 

More precisely, let $\t:[0,1]\to\R$ be defined by putting
\begin{equation} \label{eq:t}
\begin{aligned}
\t\left( \frac1{2^n} \right) = n+1 \;, \qquad n\le -1 \\
\t\left( 1- \frac1{2^n} \right) = n-1 \;, \qquad n \ge 1
\end{aligned}
\end{equation}
and by linear interpolation in between, see \figref{fig:change}.

\bigskip

\begin{center}
\psfrag{o1}[b][]{$0$}
 \psfrag{o7}[b][]{$1$}
 \psfrag{o4}[b][]{$\frac12$}
 \psfrag{o3}[b][]{$\frac14$}
 \psfrag{o2}[b][]{$\frac18$}
 \psfrag{o5}[b][]{$\frac34$}
 \psfrag{o6}[b][]{$\frac78$}
 \psfrag{a1}[][]{$-2$}
 \psfrag{a2}[][]{$-1$}
 \psfrag{a3}[][]{$0$}
 \psfrag{a4}[][]{$+1$}
 \psfrag{a5}[][]{$+2$}
\includegraphics[scale=1.0]{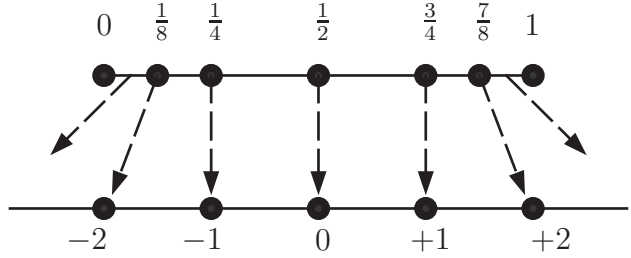}
\captionof{figure}{The change of variables from $[0,1]$ to $\R$.} \label{fig:change}
\end{center}

\bigskip

Then one can easily see that $\t$ is a bijection between the set of dyadic-rational numbers in the open unit interval $\Zd\cap (0,1)$ and the whole set of dyadic-rational numbers $\Zd$. After this \textsf{change of coordinates} the generators $A$ and $B$ of the group~$F$ (see \secref{sec:genF}) take the form
$$
\wt A(\g) = \g-1 \;, \qquad \g\in\R \;,
$$
and
$$
\wt B(\g) =
\begin{cases}
    \g \;, & \g \le 0, \\
    \frac{\g}2 \;, & 0\le \g \le 2, \\
    \g-1 \;, & \g \ge 2
\end{cases}
$$
Their inverses are, respectively,
$$
\wt A^{-1}(\g) = \g+1 \;, \qquad \g\in\R \;,
$$
and
$$
\wt B^{-1}(\g) =
\begin{cases}
    \g \;, & \g \le 0, \\
    2\g \;, & 0\le \g \le 1, \\
    \g+1 \;, & \g \ge 1 \;,
\end{cases}
$$
see \figref{fig:gen1}, where the graphs of the generators $\wt A$ and $\wt B$ themselves are drawn with solid lines, and the graphs of their inverses are drawn with dashed lines.

\begin{figure}[h]
     \psfrag{o}[][l]{$0$}
     \psfrag{i}[][l]{$1$}
     \psfrag{j}[][l]{$-1$}
     \psfrag{ii}[][l]{$2$}
     \psfrag{A}[][l]{$\wt A,\wt A^{-1}$}
     \psfrag{B}[][l]{$\wt B,\wt B^{-1}$}
          \includegraphics{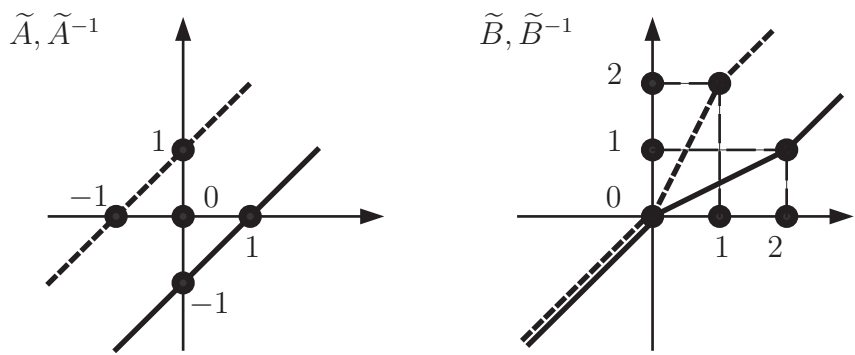}
\caption{The graphs of the standard generators of the group $\wt F$.} \label{fig:gen1}
\end{figure}

We shall denote \textsf{the image of the group $F$ under the change of coordinates} \eqref{eq:t} by $\wt F\cong F$ , i.e., $\wt F$ is the subgroup of $PLF(\R)$ generated by the transformations $\wt A$ and $\wt B$.

\subsection{Intrinsic description of $\wt F$} \label{sec:slope}

One can easily see that $\wt F$, as a subgroup of $PLF(\R)$, consists precisely of those transformations $g\in PLF(\R)$, for which the following three conditions are satisfied:

\begin{itemize}
\item[(i)]
The discontinuity points of the derivative $g'$ are all dyadic rational;
\item[(ii)]
The slopes of $g$ are integer powers of 2;
\item[(iii)]
For a sufficiently large $M>0$ the transformation $g$ has the form
$$
g(\g)=\g+C_- \quad \text{for} \quad \g\in (-\infty,-M) \;,
$$
and
$$
g(\g)=\g+C_+ \quad \text{for} \quad \g\in (M,\infty) \;,
$$
where the constants $C_-=C_-(g)$ and $C_+=C_+(g)$ are integers.
\end{itemize}

Clearly, the maps $C_\pm:\wt F\to\Z$ are group homomorphisms, and for the generators of the group $\wt F$
$$
C_-(\wt A)=C_+(\wt A)=-1 \;,
$$
whereas 
$$
C_-(\wt B)=0 \;, \qquad C_+(\wt B)=-1 \;,
$$
so that in terms of the homomorphisms $\chi_a,\chi_b:F\cong\wt F\to\Z$ introduced in \secref{sec:f}
$$
C_-=-\chi_a \quad \text{and} \quad C_+=-\chi_a-\chi_b \;.
$$

\section{Random walk on the Schreier graph $\G\cong\Zd$} \label{s:sch}

\subsection{The dyadic-rational Schreier graph}

For technical reasons (since traditionally one considers \emph{right} random walks on groups, for which the increments are added on the right), from now on we shall use the \emph{postfix notation} for the group $\wt F$. Thus, the \textsf{group operation} in~$\wt F$ will be defined as
\begin{equation} \label{eq:comp}
(g_1 g_2) (\g) = g_2 ( g_1 (\g)) \;,
\end{equation}
(rather than the more common $(g_1 g_2) (\g) = g_1 ( g_2 (\g))$ in the \emph{prefix notation} which we used in \secref{sec:genF}). Then $\wt F$ will naturally \textsf{act on $\R$ on the \emph{right}} as
\begin{equation} \label{eq:confact}
\g.g = g(\g) \;,\qquad \g\in\R,\; g\in\wt F \;.
\end{equation}
Our notation agrees with the one used when dealing with random walks on \emph{groupoids} (in particular, on the \emph{groupoids associated with group actions}), see Kaimanovich \cite{Kaimanovich05a}.

The set of \textsf{dyadic rational numbers} $\Zd\subset\R$ is obviously transitive for this action, so that $\Zd$ can be endowed with the structure of the \textsf{Schreier graph} $\G$ of the group $\wt F$ with respect to the \textsf{generating set} 
\begin{equation} \label{eq:k}
K=\{\wt A,\wt B,\wt A^{-1},\wt B^{-1}\} \;.
\end{equation}

\subsection{Simple random walk on the Schreier graph} \label{sec:srw}

The \textsf{simple random walk} on the graph $\G$ has the transitions
\begin{equation} \label{eq:srw}
\g \mapstoto^{\mu(g)} \g.g \;,
\end{equation}
where $\mu=\mu_K$ is the probability measure equidistributed on the generating set $K$, i.e.,
$$
\g \mapstoto
\begin{cases}
\displaystyle{\mapstoto^{\frac14}} \wt A(\g) \\
\displaystyle{\mapstoto^{\frac14}} \wt A^{-1}(\g) \\
\displaystyle{\mapstoto^{\frac14}} \wt B(\g) \\
\displaystyle{\mapstoto^{\frac14}} \wt B^{-1}(\g) \qquad .
\end{cases}
$$
More concretely, depending on the position of the point $\g$ these transitions are

$$
\begin{aligned}
\g &\mapstoto
\begin{cases}
\displaystyle{\mapstoto^{\frac12}} \g+1 \\
\displaystyle{\mapstoto^{\frac12}} \g-1
\end{cases}
\qquad,\quad \g\in(\infty,0] \cup [2,\infty) \;, \\
\g &\mapstoto
\begin{cases}
\displaystyle{\mapstoto^{\frac14}} \g+1 \\
\displaystyle{\mapstoto^{\frac14}} \g-1 \\
{\displaystyle\mapstoto^{\frac14}} \frac{\g}2 \\
\displaystyle{\mapstoto^{\frac14}} 2\g
\end{cases}
\qquad,\quad \g\in [0,1] \;, \\
\g &\mapstoto
\begin{cases}
\displaystyle{\mapstoto^{\frac12}} \g+1 \\
\displaystyle{\mapstoto^{\frac14}} \g-1 \\
{\displaystyle\mapstoto^{\frac14}} \frac{\g}2 \\
\end{cases}
\qquad,\quad \g\in [1,2] \;.
\end{aligned}
$$
We shall denote by $\Q$ the \textsf{probability measure on the path space of the simple random walk on $\G$ corresponding to the starting point~$0$}.

Let $|\g|_2$ be the usual \textsf{2-adic norm} of a number $\g\in\G\cong\Zd$, i.e.,
$$
|\g|_2 = 2^{-n} \;\quad \text{for}\; \g = (2m+1)2^n \;,
$$
and $|0|_2=0$.

\begin{thm} \label{thm:trans}
For $\Q$-a.e.\ sample path $(\g_n)$ of the simple random walk \eqref{eq:srw} on $\G$
$$
|\g_n|_2 \displaystyle{\toto_{n\to\infty}} \infty \;.
$$
\end{thm}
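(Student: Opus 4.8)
The plan is to show that the $2$-adic norm $|\g_n|_2$ is a submartingale-like quantity that drifts to infinity, by exhibiting a suitable Lyapunov function and controlling its increments. Concretely, I would work with $\psi(\g) = -\log_2 |\g|_2 = n$ when $\g = (2m+1)2^n$ (and handle $\g = 0$ separately, noting that $0$ is not an absorbing state and is visited only finitely often once transience is established, or simply set $\psi(0)$ by a limiting convention). Away from the ``central'' region $[0,2]$ the walk is the $\pm 1$ random walk on $\R$, which moves the point towards $\pm\infty$ and hence, heuristically, increases the $2$-adic order of most points; the only place where the $2$-adic structure is genuinely affected is the central region, where the maps $\g \mapsto \g/2$ and $\g \mapsto 2\g$ act. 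The key observation is that on $[0,1]$ the transition $\g \mapsto \g/2$ raises $\psi$ by exactly $1$ while $\g \mapsto 2\g$ lowers it by exactly $1$, and these occur with equal probability $\tfrac14$ each, so they cancel in expectation — but the $\pm 1$ moves, applied to a dyadic rational of small absolute value, typically land on integers or on numbers with larger $2$-adic order, producing a \emph{net positive} drift. I would make this precise by a direct case analysis of $E[\psi(\g_{n+1}) - \psi(\g_n) \mid \g_n]$ according to whether $\g_n$ lies in $(-\infty,0]\cup[2,\infty)$, in $[0,1]$, or in $[1,2]$, using the explicit transition table in \secref{sec:srw}.

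The main subtlety, and what I expect to be the principal obstacle, is that $\psi$ is \emph{not} a genuine submartingale: for $\g$ with $|\g|_2$ already large (i.e.\ $\g$ a large power of $2$ times an odd number, but with $\g$ itself near the central region), the $\pm 1$ steps can decrease $\psi$, so the drift fails to be uniformly positive pointwise. The standard remedy is to pass to a transformed function — e.g.\ work with $\min(\psi, N)$ for a truncation level $N$, or with a concave increasing function $h(\psi)$ — and show that the walk still has positive drift outside a finite set of ``bad'' states, then invoke a Foster–Lyapunov / recurrence-criterion argument. Alternatively, and perhaps more cleanly, I would decompose the trajectory into excursions: let $\sigma_k$ be the successive times at which the walk is in the central region $\{0,\tfrac12,1,\tfrac32,2\}$ (or more precisely in $\Zd\cap[0,2]$), and between consecutive such times the walk is the simple $\pm 1$ random walk on $\Z + c$ for some fixed shift, which is recurrent on $\R$ but — crucially — when it returns to the central region it does so through an integer or half-integer, by which point $|\cdot|_2$ has typically been forced up. Counting how $\psi$ evolves across each excursion and showing the expected per-excursion increment is bounded below by a positive constant would give $\psi(\g_n) \to \infty$ by the law of large numbers along the embedded chain, provided the excursion lengths have controlled tails.

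Having established a positive lower bound $E[\psi(\g_{n+1}) - \psi(\g_n)\mid \g_n] \ge \d > 0$ for all $\g_n$ outside a finite set $S \subset \G$, together with uniformly bounded increments $|\psi(\g_{n+1}) - \psi(\g_n)| \le 1$, the conclusion $\psi(\g_n)\to\infty$ (equivalently $|\g_n|_2 \to 0$, i.e.\ $|\g_n|_2 \to \infty$ in the sense meant in the statement — note the author writes $|\g_n|_2 \to \infty$ but the $2$-adic norm of $(2m+1)2^n$ \emph{decreases} as the $2$-adic order grows; I would reconcile this with the convention used, presumably $|\g|_2 = 2^{+n}$, in which case my $\psi$ should be $+\log_2$) follows from a standard submartingale-plus-drift argument: the walk spends only a vanishing fraction of time in $S$ (itself a consequence of the drift, via a renewal/comparison argument), and on the complement $\psi$ increases linearly in expectation, so by the martingale strong law $\psi(\g_n)/n \to \d' > 0$ almost surely, hence $\psi(\g_n) \to \infty$. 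The one genuinely technical point to nail down carefully is the return-time estimate for the $\pm 1$ excursions: since $\pm 1$-random walk on the line is recurrent, returns to the central region happen infinitely often almost surely, but I must ensure the drift accumulated does not get washed out — this is where the $2$-adic arithmetic ($\g \pm 1$ landing on something with higher $2$-order) has to be quantified, and it is the crux of the whole argument.
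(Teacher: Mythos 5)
Your proposal misidentifies both the direction of the drift and its source, and this breaks the main line of your argument. The paper's convention is the standard one, $|\g|_2=2^{-n}$ for $\g=(2m+1)2^n$ (stated explicitly just before the theorem), so $|\g_n|_2\to\infty$ means the walk escapes to dyadic rationals with ever larger denominators; the favourable moves are therefore the halvings $\g\mapsto\g/2$, not arrivals at points of large 2-adic order. Your claimed mechanism --- that the $\pm1$ steps ``typically land on integers or on numbers with larger 2-adic order'' and that quantifying this is ``the crux of the whole argument'' --- is not a fact: for a non-integer dyadic rational $\g=(2m+1)2^n$, $n<0$, the points $\g\pm1=(2m+1\pm2^{-n})2^n$ have exactly the same 2-adic norm, so the $\pm1$ steps are neutral. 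Consequently the one-step drift of $x=\log_2|\g|_2$ is exactly zero at every non-exceptional point of $[0,1]$ (the halving and the doubling, each of probability $\tfrac14$, cancel) and at every point of $(-\infty,0]\cup[2,\infty)$ (only $\pm1$ steps); it is positive only on $[1,2]$, where halving is available but doubling is not. Hence there is no finite set $S$ outside of which $E\left[\psi(\g_{n+1})-\psi(\g_n)\mid\g_n\right]\ge\delta>0$: the drift vanishes identically on unbounded regions, so the Foster--Lyapunov scheme of your final paragraph cannot even be started. Its intended conclusion, $\psi(\g_n)/n\to\delta'>0$, is also false in real time, since the excursions into the flat regions are excursions of a recurrent $\pm1$ walk with infinite expected length, so the norm cannot grow linearly in $n$ (nor does the theorem claim it).

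Your alternative suggestion --- pass to the embedded chain on the central region and show a uniformly positive expected gain per induced step --- is exactly the route taken in the paper, but it must be carried out with the correct mechanism. The paper shows that $I=[0,1)$ is recurrent, computes the trace of the walk on $I$ explicitly, and finds that $x=\log_2|\g|_2$ then increases by $1$ with probability $\tfrac38$ and decreases by $1$ with probability $\tfrac14$ (for $\g\in(0,\tfrac12)$), respectively $\tfrac18$ (for $\g\in(\tfrac12,1)$): a uniformly positive drift with increments bounded by $1$. Since outside $I$ only $\pm1$ steps and halvings occur, the norm can drop at most by a bounded factor between consecutive visits, and the theorem follows. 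The per-excursion gain comes from the asymmetry of the returns: an excursion above $1$ re-enters $I$ either at $\g-1$ (no change) or at $\g/2$ (one level deeper) with equal conditional probability, whereas an excursion below $0$ returns at the same point it left from. Without locating the drift there, the case analysis you outline would not produce the positive constant you need, so as it stands the proposal has a genuine gap.
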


As a consequence we immediately obtain

\begin{thm} \label{thm:transimple}
The simple random walk on $\G$ is transient.
\end{thm}

\begin{proof}[Proof of \thmref{thm:trans}]
Let us first of all notice that, outside of the interval $(0,2)$ the transition probabilities of the random walk \eqref{eq:srw} are just
$$
p(\g,\g\pm 1)=\frac12 \;.
$$
Therefore, the interval $[0,2)$ is recurrent by the classical properties of the simple random walk on $\Z$. Moreover, starting from any point $\g\in [1,2)$ the random walk will eventually hit the interval $[0,1)$ (and the corresponding hitting distribution is supported by the points $\g-1$ and $\frac{\g}2$ with the equal weights $\frac12$). Thus, the interval $I=[0,1)$ is also recurrent. The transitions of the induced random walk on $I$ are, \linebreak for $\g\in \left[0,\frac12\right)$,
$$
\g \mapstoto
\begin{cases}
\displaystyle{\mapstoto^{\frac14}} \g \\
\displaystyle{\mapstoto^{\frac14}} \g+1 \mapstoto
\begin{cases}
\displaystyle{\mapstoto^{\frac12}} \g \\
{\displaystyle{\mapstoto^{\frac12}}} \frac{\g}2+\frac12
\end{cases}\\
{\displaystyle{\mapstoto^{\frac14}}} \frac{\g}2 \\
\displaystyle{\mapstoto^{\frac14}} 2\g \\
\end{cases} \qquad,
$$
i.e.,
$$
\g \mapstoto
\begin{cases}
\displaystyle{\mapstoto^{\frac38}} \g \\
{\displaystyle{\mapstoto^{\frac14}}} \frac{\g}2 \\
{\displaystyle{\mapstoto^{\frac18}}} \frac{\g}2+\frac12 \\
\displaystyle{\mapstoto^{\frac14}} 2\g \\
\end{cases} \;,
$$
and, for $\g\in \left[\frac12,1\right)$,
$$
\g \mapstoto
\begin{cases}
\displaystyle{\mapstoto^{\frac14}} \g \\
\displaystyle{\mapstoto^{\frac14}} \g+1
\mapstoto
\begin{cases}
\displaystyle{\mapstoto^{\frac12}} \g \\
{\displaystyle{\mapstoto^{\frac12}}} \frac{\g}2+\frac12
\end{cases}
\\
{\displaystyle{\mapstoto^{\frac14}}} \frac{\g}2 \\
\displaystyle{\mapstoto^{\frac14}} 2\g \mapstoto
\begin{cases}
{\displaystyle{\mapstoto^{\frac12}}} 2\g-1 \\
{\displaystyle{\mapstoto^{\frac12}}} \g
\end{cases}
\end{cases}
\qquad,
$$
i.e.,
$$
\g \mapstoto
\begin{cases}
\displaystyle{\mapstoto^{\frac12}} \g \\
{\displaystyle{\mapstoto^{\frac14}}} \frac{\g}2 \\
{\displaystyle{\mapstoto^{\frac18}}} \frac{\g}2+\frac12 \\
{\displaystyle{\mapstoto^{\frac18}}} 2\g-1 \\
\end{cases} \;.
$$
Therefore, for $\g\in I\setminus\left\{0,\frac12\right\}$ the logarithm $x=\log_2|\g|_2$ of the 2-adic norm changes as
$$
\begin{aligned}
x &\mapstoto
\begin{cases}
\displaystyle{\mapstoto^{\frac38}} x \\
\displaystyle{\mapstoto^{\frac38}} x+1 \\
\displaystyle{\mapstoto^{\frac14}} x-1 \\
\end{cases}
\qquad,\quad \g\in \left(0,\frac12\right) \;,
\\
x &\mapstoto
\begin{cases}
\displaystyle{\mapstoto^{\frac12}} x \\
\displaystyle{\mapstoto^{\frac38}} x+1 \\
\displaystyle{\mapstoto^{\frac18}} x-1 \\
\end{cases}
\qquad,\quad \g\in \left(\frac12,1\right) \;,
\end{aligned}
$$
i.e., these transitions on $\Z_+$ have a uniform positive drift, which implies the claim.
\end{proof}

\begin{rem}
It might be interesting to study the properties of the induced random walk on $I$ and of its stationary measure (which is most likely unique).
\end{rem}

\subsection{Comparison criterion for transience of Markov chains}

We shall now use a comparison argument in order to deduce the transience of general (not necessarily reversible!) random walks on the Schreier graph $\G$ from the transience just of the simple random walk on $\G$. Its idea goes back to Baldi -- Lohou\'e -- Peyri\`ere \cite{Baldi-Lohoue-Peyriere77}, and it has been quite popular ever since (e.g., see Varopoulos \cite{Varopoulos83}, Chen \cite{Chen91}, Woess \cite{Woess94} as well as the exposition in Woess' book \cite[Sections 2.C and 3.A]{Woess00}). For the sake of completeness we shall prove it here in the generality sufficient for our purposes by slightly modifying the arguments of Varopoulos from \cite[Section 4]{Varopoulos83}.

\begin{thm} \label{thm:r}
Let $P$ and $P'$ be two Markov operators on a countable state space $X$ with the respective transition probabilities $p(\cdot,\cdot)$ and $p'(\cdot,\cdot)$, and such that
\begin{itemize}
\item[(i)]
$P$ and $P'$ have a common stationary measure $m$;
\item[(ii)]
The operator $P$ is reversible with respect to the measure $m$, i.e., it is self-adjoint as an operator on the space $L^2(X,m)$, or, equivalently, 
$$
m(x)p(x,y)=m(y)p(y,x) \qquad\forall\,x,y\in X \;,
$$
\item[(iii)]
There exists $\e>0$ such that $P'\ge \e P$, i.e., 
$$
p'(x,y) \ge \e p(x,y) \qquad\forall\,x,y\in X \;.
$$
\end{itemize}
Then the transience of the operator $P$ implies the transience of the operator~$P'$.
\end{thm}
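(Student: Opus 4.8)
The plan is to use the standard Dirichlet-form (variational) characterisation of transience. Recall that a reversible Markov operator $P$ with stationary measure $m$ on a countable state space $X$ is transient if and only if for some (equivalently, any) fixed reference state $o\in X$ the ``capacity'' of $o$ is positive, i.e.
\begin{equation*}
\inf\Bigl\{ \mathcal E_P(f,f) : f\in \fun(X,\R),\; f(o)=1 \Bigr\} > 0 \;,
\end{equation*}
where $\mathcal E_P(f,f)=\tfrac12\sum_{x,y} m(x)p(x,y)\bigl(f(x)-f(y)\bigr)^2$ is the Dirichlet form of $P$. (This is the variational form of the statement that the Green function $G_P(o,o)$ is finite; it is exactly what Varopoulos uses in \cite[Section~4]{Varopoulos83} and is reproduced in \cite[Sections 2.C and 3.A]{Woess00}. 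Reversibility is used precisely to have this clean self-adjoint quadratic form at our disposal.) The subtlety is that $P'$ need not be reversible, so $\mathcal E_{P'}$ is not literally a Dirichlet form; the remedy is to replace $P'$ by its additive symmetrisation, as explained below.

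**Key steps, in order.** First I would record the elementary monotonicity principle for Dirichlet forms: if two reversible operators share the stationary measure $m$ and their conductances satisfy $c'(x,y)\ge \e\, c(x,y)$ pointwise (with $c(x,y)=m(x)p(x,y)$), then $\mathcal E_{P'}\ge \e\,\mathcal E_P$ and hence transience of $P$ forces transience of $P'$ by the capacity criterion above --- one simply feeds the same test functions into both forms. Second, since $P'$ is not assumed reversible, I would introduce the \emph{symmetrised} operator $\wh{P'}$ determined by the conductances $\wh c\,'(x,y)=\tfrac12\bigl(m(x)p'(x,y)+m(y)p'(y,x)\bigr)$; it is reversible with respect to the same $m$ by construction (here one checks $m$ is indeed $\wh{P'}$-stationary, which follows from $m$ being $P'$-stationary together with the fact that $m$ is automatically $(P')^*$-stationary since $(P')^*$ has the same stationary measure as any operator adjoint on $L^2(X,m)$ --- more carefully, $\sum_x m(x)p'(x,y)=m(y)$ gives stationarity of $P'$, and symmetrising preserves the row sums after weighting by $m$). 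Third, hypotheses (ii) and (iii) give
\begin{equation*}
\wh c\,'(x,y) \;=\; \tfrac12\bigl(m(x)p'(x,y)+m(y)p'(y,x)\bigr) \;\ge\; \tfrac{\e}{2}\bigl(m(x)p(x,y)+m(y)p(y,x)\bigr) \;=\; \e\, m(x)p(x,y)\;,
\end{equation*}
using reversibility of $P$ in the last equality; so the monotonicity principle of the first step applies to the pair $(P,\wh{P'})$ and yields transience of $\wh{P'}$. Finally I would invoke the standard fact that a Markov operator and its symmetrisation have the same type (recurrent/transient) --- equivalently, transience is a property of the underlying weighted graph, not of the direction of the edges --- to conclude transience of $P'$ itself. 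Alternatively, and perhaps more transparently, one can avoid this last appeal by noting that transience of $P'$ is equivalent to $\sum_n (P')^n(o,o)<\infty$, and comparing Green functions directly via the symmetrised form; but routing through $\wh{P'}$ keeps the argument short.

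**Main obstacle.** The genuinely delicate point is the non-reversibility of $P'$: one cannot directly write a Dirichlet form for $P'$, and the naive inequality ``$\mathcal E_{P'}\ge\e\mathcal E_P$'' is meaningless as stated. The symmetrisation trick is what resolves this, but it must be handled with care --- in particular verifying that $m$ remains stationary for $\wh{P'}$, and that transience is invariant under symmetrisation (the latter being the place where one genuinely needs that transience of a reversible chain depends only on the conductance network). Everything else --- the capacity/Dirichlet-form criterion for transience of a reversible chain, and the monotonicity of $\mathcal E$ under an increase of conductances --- is classical and I would simply cite \cite{Varopoulos83,Woess00}, reproducing the one-line test-function comparison for completeness as the paper promises.
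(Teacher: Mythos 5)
Your steps 1--3 are fine: the $m$-symmetrisation $\wh{P'}$ is indeed Markov and reversible with respect to $m$, its conductances dominate $\e\,m(x)p(x,y)$ by (ii) and (iii), and Rayleigh/capacity monotonicity for reversible chains then gives transience of $\wh{P'}$ from transience of $P$. The gap is the final step. The ``standard fact'' you invoke --- that a Markov operator and its symmetrisation have the same type, ``equivalently'' that transience is a property of the underlying weighted graph and not of the direction of the edges --- is false as stated: the nearest-neighbour random walk on $\Z$ with drift preserves the counting measure and is transient, while its symmetrisation, the simple random walk on $\Z$, is recurrent. Only one implication is true, namely that transience of the symmetrised chain forces transience of the original chain, and that is exactly the implication you need; but since $P'$ is \emph{not} reversible, this is not a statement about networks at all, and it carries essentially the whole analytic content of the theorem. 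It can be proved, e.g., via the resolvent inequality $\lan (I-\la P')^{-1}f,f\ran \le \lan (I-\la \wh{P'})^{-1}f,f\ran$ (valid because the symmetric part of $I-\la P'$ is $I-\la\wh{P'}$, which is positive definite for $\la<1$), followed by $\la\to 1$ with $f=\d_o$ --- i.e., by precisely the kind of Green-kernel comparison that the theorem is about. As written, you have outsourced the crux to a misstated citation, and your ``alternative'' of ``comparing Green functions directly via the symmetrised form'' is exactly the step that is missing.

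For comparison, the paper's proof does not symmetrise at all: it writes $P'=\e P+(1-\e)Q$ with $Q$ Markov and $m$-stationary (hence a contraction on $L^2(X,m)$), deduces $\lan (I-\la P')f,f\ran\ge\e\,\lan f,(I-\la P)f\ran$, and then applies Cauchy--Schwarz for the positive bilinear form $\lan f,g\ran_{P,\la}=\lan f,(I-\la P)g\ran$ to the identity $\lan (I-\la P')^{-1}f,f\ran=\lan (I-\la P')^{-1}f,(I-\la P)^{-1}f\ran_{P,\la}$, obtaining $\Gr_{P',\la}(o,o)\le\frac1\e\,\Gr_{P,\la}(o,o)$ and letting $\la\to1$. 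Note the paper explicitly remarks that it knows no ``elementary'' proof of the resulting Green-function inequality, which is a further sign that your last step cannot be waved through as routine. If you wish to keep the symmetrisation route, you must prove (or cite precisely, in its correct one-sided form) the implication ``$\wh{P'}$ transient $\Rightarrow P'$ transient'', e.g.\ via the displayed resolvent inequality; at that point the detour through $\wh{P'}$ buys little over the paper's direct argument.
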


\begin{proof}
First of all, let us notice that the operator 
$$
Q= \frac{P-\e P'}{1-\e}
$$ 
is Markov and preserves the measure $m$. Therefore, for any $0<\la< 1$ and any $f\in L^2(X,m)$
$$
|\la \lan f, Qf \ran| \le |\lan f, Qf \ran| \le \| f \|^2 \;.
$$
It implies that
\begin{equation} \label{eq:neq}
\lan (I-\la P') f, f \ran \ge \e \| f \|^2_{P,\la} \ge 0 \;,
\end{equation}
where $\lan \cdot,\cdot \ran$ denotes the scalar product on the space $L^2(X,m)$,
\begin{equation} \label{eq:form}
\lan f,g \ran_{P,\la} = \lan f, (I-\la P) g \ran
\end{equation}
is the positive definite bilinear form on $L^2(X,m)$ determined by the operator $I-\la P$, and
$\|\cdot\|, \|\cdot\|_{P,\la}$ are the respective associated norms.

The Cauchy-Schwarz inequality for the form $\lan \cdot,\cdot \ran_{P,\la}$ then implies that for any $f\in L^2(X,m)$
$$
\begin{aligned}
\lan (I-\la P')^{-1} f, f \ran^2
&= \lan (I-\la P')^{-1} f, (I-\la P)(I-\la P)^{-1}f \ran^2 \\
&= \lan (I-\la P')^{-1} f, (I-\la P)^{-1}f \ran^2_{P,\la} \\
&\le \|(I-\la P')^{-1} f\|^2_{P,\la} \cdot \|(I-\la P)^{-1} f\|^2_{P,\la} \;.
\end{aligned}
$$
By \eqref{eq:neq},
$$
\|(I-\la P')^{-1} f\|^2_{P,\la} \le \frac1\e \lan (I-\la P')^{-1} f, f \ran \;,
$$
whereas
$$
\|(I-\la P)^{-1} f\|^2_{P,\la} = \lan (I-\la P)^{-1} f, f \ran \;,
$$
so that 
\begin{equation} \label{eq:neq2}
\lan (I-\la P')^{-1} f, f \ran^2 \le \frac1\e \lan (I-\la P')^{-1} f, f \ran \cdot \lan (I-\la P)^{-1} f, f \ran \;.
\end{equation}

Now, if $f=\d_o$ for a point $o\in X$, then 
$$
\Gr_{P,\la}(o,o) = \frac1{m(o)}\lan (I-\la P)^{-1} \d_o, \d_o \ran
$$ 
is the $\la$-Green kernel of the operator $P$ at the point $o$, and the same holds for the operator $P'$. Since $\la<1$,
$$
0 < \Gr_{P',\la}(o,o) < \infty \;,
$$
whence by \eqref{eq:neq2}
$$
\Gr_{P',\la}(o,o) \le \frac1\e \Gr_{P,\la}(o,o) \le \frac1\e \Gr_{P,1}(o,o) \;,
$$
which, by letting $\la\to 1$, implies the inequality
\begin{equation} \label{eq:neq3}
\Gr_{P',1}(o,o) \le \frac1\e \Gr_{P,1}(o,o) \;.
\end{equation}
Therefore, the finiteness of $\Gr_{P,1}(o,o)$ ($\equiv$ the transience of $P$) implies the finiteness of $\Gr_{P',1}(o,o)$ ($\equiv$ the transience of $P'$).
\end{proof}

\begin{rem}
We are not aware of any ``elementary'' proof of inequality~\eqref{eq:neq3}.
\end{rem}

\begin{rem}
The form \eqref{eq:form} decomposes as
$$
\lan \cdot,\cdot\ran_{P,\la} = \D_P + (1-\la) \lan \cdot,\cdot \ran \;,
$$
where $\D_P$ is the \emph{Dirichlet form} of the operator $P$.
\end{rem}

\subsection{General random walks}

Given a probability measure $\mu$ on $G$ we shall denote by $(\G,\mu)$ the \textsf{induced random walk} on $\G$
with the transitions
\begin{equation} \label {eq:rwg}
\g \mapstoto^{\mu(g)} \g.g \;.
\end{equation}
In other words, the measure $\Q_\g$ on the space of paths of the induced random walk issued from a point $\g\in\G$ is the image of the measure $\P$ on the path space of the random walk $(G,\mu)$ (see \secref{sec:rw}) under the map
\begin{equation} \label{eq:gg}
\gg = (g_n) \mapsto \g.\gg = (\g.g_n) \;.
\end{equation}

\begin{thm} \label{thm:transall}
Let $\mu$ be a strictly non-degenerate probability measure on the group $\wt F$. Then the induced random walk $(\G,\mu)$ is transient.
\end{thm}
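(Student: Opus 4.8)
The plan is to derive the transience of $(\G,\mu)$ from that of the simple random walk on $\G$ (\thmref{thm:transimple}) by feeding a suitable pair of operators into the comparison criterion \thmref{thm:r}, taking for the common stationary measure the counting measure $m=\#$ on the vertex set $\Zd$. Write $\Pi$ and $\Pi_K$ for the transition operators on $\G$ of the induced walks $(\G,\mu)$ and $(\G,\mu_K)$. Hypotheses (i) and (ii) of \thmref{thm:r} will be automatic. First, every element of $\wt F$ acts on $\Zd$ bijectively, so for \emph{any} probability measure $\eta$ on $\wt F$ the induced transition operator on $\G$ preserves $\#$: for each $\xi\in\Zd$,
$$
\sum_{\g\in\Zd}\ \sum_{g\,:\,g(\g)=\xi}\eta(g)=\sum_{g\in\wt F}\eta(g)=1\;.
$$
In particular $\#$ is stationary for both $\Pi_K$ and $\Pi$. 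Second, the generating set $K$ of \eqref{eq:k} is symmetric, so the change of variable $s\mapsto s^{-1}$ on $K$ gives $p_K(\g,\xi)=p_K(\xi,\g)$, i.e.\ $\Pi_K$ is reversible with respect to $\#$. Thus $\Pi_K$ plays the role of the reversible operator in \thmref{thm:r}, and it is transient by \thmref{thm:transimple}.

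The step requiring a small maneuver is hypothesis (iii): $\supp\mu$ need not meet $K$, so one cannot compare $\Pi$ with $\Pi_K$ in a single step. Replace $\mu$ by the geometric average
$$
\bar\mu=\sum_{n\ge1}2^{-n}\mu^{*n}\;,
$$
again a probability measure on $\wt F$. Strict non-degeneracy, $\sgr\mu=\wt F$, provides for each of the four $s\in K$ a factorization $s=h_1\cdots h_{n_s}$ with $h_i\in\supp\mu$, whence $\mu^{*n_s}(s)>0$ and $\bar\mu(s)\ge 2^{-n_s}\mu^{*n_s}(s)>0$; as $K$ is finite, $\e:=4\min_{s\in K}\bar\mu(s)>0$ satisfies $\bar\mu\ge\e\,\mu_K$ pointwise on $\wt F$ (vacuously outside $K$). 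Passing to induced operators, $\bar\Pi:=\sum_{n\ge1}2^{-n}\Pi^{\,n}$ is the transition operator of $(\G,\bar\mu)$, it preserves $\#$ by the first paragraph, and $\bar\Pi\ge\e\,\Pi_K$. Hence all three hypotheses of \thmref{thm:r} hold for the pair $(\Pi_K,\bar\Pi)$, and we conclude that $\bar\Pi$ is transient.

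Finally one transfers transience from $(\G,\bar\mu)$ back to $(\G,\mu)$. From $\bar\Pi=\tfrac12\Pi\,(I-\tfrac12\Pi)^{-1}$ one obtains $I-\bar\Pi=(I-\Pi)(I-\tfrac12\Pi)^{-1}$, hence $(I-\bar\Pi)^{-1}=(I-\Pi)^{-1}(I-\tfrac12\Pi)$; evaluating at a base point $o\in\G$ and using $(I-\Pi)^{-1}=\sum_{n\ge0}\Pi^{\,n}$ yields the clean identity
$$
\Gr_{\bar\Pi}(o,o)=\tfrac12\,\Gr_{\Pi}(o,o)+\tfrac12\;.
$$
Thus finiteness of $\Gr_{\bar\Pi}(o,o)$ forces finiteness of $\Gr_{\Pi}(o,o)$; since $\sgr\mu=\wt F$ acts transitively on $\Zd$ the walk $(\G,\mu)$ is irreducible, so finiteness of its Green kernel at one (equivalently any) vertex is precisely its transience.

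I expect the stationarity and reversibility checks and the domination $\bar\mu\ge\e\,\mu_K$ to be completely routine; the only genuine issue is that $\supp\mu$ may miss $K$, which is why one must pass to $\bar\mu$ and then argue --- as in the last paragraph --- that geometric averaging is transience-neutral, the one place where the operator bookkeeping is not a single line. One may equally well run the comparison with the resolvent $\check\mu=(1-t)\sum_{n\ge0}t^{\,n}\mu^{*n}$, $t\in(0,1)$, for which $\Gr_{\check\Pi}(o,o)=\tfrac{1-t}{t}\,\Gr_{\Pi}(o,o)+1$; this is perhaps conceptually tidier but computationally the same.
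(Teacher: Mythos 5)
Your proposal is correct and takes essentially the same route as the paper: both verify the hypotheses of \thmref{thm:r} for the pair (simple random walk operator of $\mu_K$, operator of an averaged measure) with the counting measure on $\G\cong\Zd$, using strict non-degeneracy to get the domination $\ge\e\,\mu_K$ on $K$, and then invoke \thmref{thm:transimple}. The only (harmless) differences are that the paper averages over finitely many convolutions, $\mu'=\frac1n\sum_{k=1}^n\mu^{*k}$, where you use the geometric average $\sum_{n\ge1}2^{-n}\mu^{*n}$, and that you spell out, via the exact Green-kernel identity, the final transfer of transience from the averaged walk back to $(\G,\mu)$ --- a step the paper asserts without detail.
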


\begin{proof}
This is a direct application of \thmref{thm:r}. By the strict non-degeneracy of the measure $\mu$, for any $g\in\wt F$ there exists $n=n(g)$ such that the $n$-fold convolution $\mu^{*n}(g)$ is strictly positive. Let
$$
n = \max \{ n(g): g\in K \} \;,
$$
and 
$$
\d = \min \{ \mu^{*n(g)}(g): g\in K \} > 0 \;,
$$
where $K$ is the generating set \eqref{eq:k}. Then the measure 
$$
\mu' = \frac1n \sum_{k=1}^n \mu^{*k}
$$
has the property that 
\begin{equation} \label{eq:qq}
\mu'(g) \ge \e \mu_K(g) \qquad\forall\,g\in K \;,
\end{equation}
with $\e = \d/n>0$.

Let us now take for $P$ (respectively, $P'$) the Markov operator of the simple random walk $(\G,\mu_K)$ (respectively, the operator of the random walk $(\G,\mu')$). If we take for $m$ the counting measure on $\G$, then conditions (i) and (ii) of \thmref{thm:r} are obviously satisfied, whereas condition (iii) follows from inequality \eqref{eq:qq}. Thus, \thmref{thm:transimple} implies transience of the random walk $(\G,\mu')$, and therefore of the random walk $(\G,\mu)$ as well.
\end{proof}

\begin{rem}
One should be able to significantly relax the condition $\sgr\mu=\wt F$ imposed on the measure $\mu$ in
\thmref{thm:transall}. We expect it to hold just under very mild assumptions on the group $\gr\mu$.
\end{rem}

\section{Non-trivial behaviour at infinity determined by stabilizing configurations} \label{s:config}

\subsection{Groups of configurations on $\G$}

Let $\fun(\G,\Z)$ (respectively, $\Fun(\G,\Z)$) denote the \textsf{additive group of finitely supported} (respectively, of all) \textsf{$\Z$-valued configurations on} the graph $\G$, cf. \secref{sec:lamp}. By using the right action \eqref{eq:confact} of the group $\wt F$ on $\G\cong\Zd$ we shall define the associated \emph{left} action of $\wt F$ on the configuration space $\Fun(\G,\Z)$ as
\begin{equation} \label{eq:act}
\mathbf{S}^g \f(\g) = \f(\g.g) = \f(g(\g)) \;,\qquad g\in\wt F,\;\g\in\G \;.
\end{equation}

\subsection{Configurations associated with the elements of $\wt F$}

Given a transformation $g\in\wt F$, we shall denote by $\C_g\in\fun(\G,\Z)$ the \textsf{associated configuration} on $\G$ defined as
$$
\C_g(\g) = \log_2 g'(\g+0) - \log_2 g'(\g-0) \qquad\forall\,\g\in\G \;.
$$
In other words, $\C_g(\g)$ is the difference between the base 2 logarithms of the left and the right slopes of $g$ at the point $\g$. The support of $\C_g$ is precisely the set of break points of $g$, i.e., the set of discontinuity points of the derivative $g'$, see \figref{fig:config}. 

Informally we shall say that the value $\C_g(\g)$ is the \textsf{logarithmic increment} of the slope of $g$ at the point $\g$. The ratio $\frac{g'(\g+0)}{g'(\g-0)}$ appears in the literature on Thompson's groups (e.g., see Liousse \cite{Liousse08}) under the name of the \textsf{jump} of $g$ at the break point $\g$, so that in these terms $\C_g(\g)$ is the logarithm of the jump of $g$ at $\g$.

\begin{center}
\psfrag{x}[][]{$\g_1$}
\psfrag{y}[][]{$\g_2$}
\psfrag{a}[][]{$\C(\g_1)=-2$}
\psfrag{b}[][]{$\C(\g_2)=1$}
\includegraphics{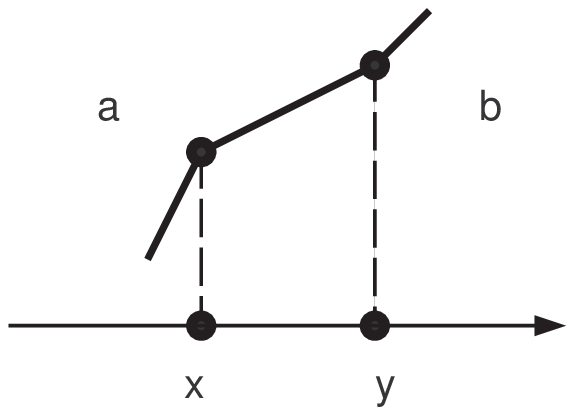}
\captionof{figure}{The configuration on $\G\cong\Zd$ associated with an element of $\wt F$.} \label{fig:config}
\end{center}

\begin{rem}
Obviously, two different transformations $g_1,g_2\in\wt F$ give rise to the same configuration if and only if $g_2=g_1+C$ for $C\in\Z$ (for, if the difference between two functions from $\wt F$ is a constant, then this constant must be an integer, see \secref{sec:slope}). On the other hand, not every configuration $\f\in\fun(\G,\Z)$ corresponds to an element $g\in\wt F$. There are two conditions, whose combination is necessary and sufficient for that. The first condition is
$$
\sum_{\g\in\G} \f(\g) = 0
$$
(for, any $g\in\wt F$ has slope 1 both at $-\infty$ and at $+\infty$, so that the sum of logarithmic increments of the slope must be 0). It guarantees that there exists a transformation $g\in PLF(\R)$ with
$$
\begin{cases}
g(\t)=\t+C_- \qquad \text{at}\; -\infty \;, \\ 
g(\t)=\t+C_+ \qquad \text{at}\; +\infty \;,
\end{cases}
$$
where the difference $C_+-C_-$ is uniquely determined by the configuration $\f$. Now, the second condition is $C_+-C_-\in\Z$ (cf. \secref{sec:slope}). For instance, the configuration $\f=-\d_0+\d_1$ defined as
$$
\f(\g) =
\begin{cases}
-1 \;, &\g=0 \;, \\
+1 \;, &\g=1 \;, \\
\mbox{\;\;\,0} \;, &\text{othwerwise}
\end{cases}
$$
satisfies the first condition, but not the second one; the resulting transformation $g\in PLF(\R)$ is (up to an additive constant)
$$
g(\g) =
\begin{cases}
\g \;, & \g\le 0 \;, \\
\frac{\g}2\;, & 0\le \g \le 1 \;, \\
\g-\frac12 \;, & \g\ge 1 \;.
\end{cases}
$$
\end{rem}

\medskip

\subsection{Composition of configurations}

By the chain rule
$$
(g_1 g_2)'(\g) = g_1'(\g) \cdot g_2'(g_1(\g))
$$
(keep in mind that the group multiplication \eqref{eq:comp} in $\wt F$ is defined by using the ``inverse composition''). Thus,
\begin{equation} \label{eq:cn}
\C_{g_1 g_2}(\g) = \C_{g_1}(\g) + \C_{g_2} (g_1(\g)) \qquad\forall\, g_1,g_2\in\wt F, \g\in\G\;,
\end{equation}
or, in terms of the action \eqref{eq:act},
\begin{equation} \label{eq:cn2}
\C_{g_1 g_2} = \C_{g_1} + \mathbf{S}^{g_1} \C_{g_2} \qquad\forall\, g_1,g_2\in\wt F\;.
\end{equation}

\begin{rem}
Formula \eqref{eq:cn2} looks very similar to the ``configuration component'' of formula \eqref{eq:mult} for the multiplication in the lamplighter groups. Note, however, that the actions $\mathbf{T}$ \eqref{eq:act0} and $\mathbf{S}$ \eqref{eq:act} on the respective configuration spaces, which appear in these formulas, are quite different. The action $\mathbf{T}$ is determined by the action of the group on itself on the \emph{left}, whereas the action $\mathbf{S}$ is determined by the action on $\G\cong\Zd$ on the \emph{right}. In particular, as a result of this
$$
\mathbf{T}^g \d_h = \d_{gh} \qquad\forall\,g,h \in G
$$
in the lamplighter setup of \secref{sec:lamp}, whereas for the action \eqref{eq:act}
$$
\mathbf{S}^g \d_\g = \d_{\g.g^{-1}} \qquad \forall\, g\in\wt F,\; \g\in\G \;.
$$
\end{rem}

Formula \eqref{eq:cn2} allows one to define yet another (``skew'') left action of the group $\wt F$ on the configuration space $\Fun(\G,\Z)$ as
\begin{equation} \label{eq:newact}
(g,\C) \mapsto \C_g + \mathbf{S}^{g}\C \;.
\end{equation}
This action is similar to the natural action of the lamplighter group $\L(G)$ on the configuration space $\Fun(G,\Z/2\Z)$ defined as
$$
(g,\F)\,\C = \F + \mathbf{T}^g\C \;.
$$
It is this action that we mentioned at the end of \secref{sec:stablamp} saying that it is determined by the ``configuration part'' of the multiplication formula in the group $\L(G)$. Note, however, that in the lamplighter case the component $g$ of a group element $(g,\F)\in\L(G)$ is responsible just for ``translating'' a configuration $\C$, whereas the component $\F$ is responsible just for adding an ``increment'' to $\C$. On the contrary, there is no such ``splitting'' in the case of the action \eqref{eq:newact}.

\begin{lem} \label{lem:fix}
The action \eqref{eq:newact} of the group $\wt F$ on $\Fun(\G,\Z)$ has no fixed points.
\end{lem}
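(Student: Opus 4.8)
The plan is to show that a fixed point of the action \eqref{eq:newact} would force $\wt F$ to contain a nontrivial homomorphism-like datum that it provably lacks; concretely, I would translate the fixed-point equation into a statement about configurations and then exploit the fact that elements of $\wt F$ are compactly-supported-derivative perturbations of translations, together with the transitivity of the $\wt F$-action on $\G\cong\Zd$.

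\smallskip

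Suppose $\C\in\Fun(\G,\Z)$ satisfies $\C_g+\mathbf{S}^g\C=\C$ for every $g\in\wt F$; equivalently, using \eqref{eq:act}, $\C_g(\g)=\C(\g)-\C(g(\g))$ for all $\g\in\G$ and all $g\in\wt F$. First I would read off what this says for the two generators. For $g=\wt A$ we have $\wt A(\g)=\g-1$ and $\C_{\wt A}\equiv 0$ (the translation is linear, so it has no break points), hence $\C(\g)=\C(\g-1)$ for all $\g$; thus $\C$ is invariant under the integer translation $\g\mapsto\g-1$, i.e. $\C$ is "periodic" in this sense. For $g=\wt B$, whose only break points are at $0$ and $2$ with jumps $1/2$ and $2$, the configuration $\C_{\wt B}$ is supported on $\{0,2\}$ with $\C_{\wt B}(0)=-1$, $\C_{\wt B}(2)=+1$; the fixed-point equation then pins down $\C(0)-\C(\wt B(0))=-1$ and $\C(2)-\C(\wt B(2))=+1$, while $\C(\g)=\C(\wt B(\g))$ for all other $\g$. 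Since $\wt B(0)=0$ and $\wt B(2)=2$ are fixed, the first of these relations reads $0=-1$, a contradiction — so already the generators alone kill any fixed point, and $\C$ cannot exist.

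\smallskip

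Let me double-check the apparent shortcut, because it is almost too quick. The relation $\C_g(\g)=\C(\g)-\C(g(\g))$ evaluated at a point $\g$ fixed by $g$ gives $\C_g(\g)=0$; but $\wt B$ fixes $0$ and has a genuine break there with $\C_{\wt B}(0)=-1\neq 0$. So indeed no $\C$ can satisfy the $\wt B$-instance of the equation at $\g=0$. The key general principle I would state is: for the action \eqref{eq:newact} to have a fixed point one needs $\C_g(\g)=0$ whenever $g(\g)=\g$, i.e. no element of the group may have a break point at any of its own fixed points — and this fails spectacularly in $\wt F$ (e.g. $\wt B$ at $0$, or $\wt B^{-1}$ at $0$, or products fixing a given dyadic with a break there). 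If one prefers an argument not singling out a convenient fixed point: iterate the cocycle relation to get $\C(\g)-\C(g^n(\g))=\sum_{k=0}^{n-1}\C_g(g^k(\g))=\C_{g^n}(\g)$, and note that for suitable $g$ (again $\wt B$) the orbit $g^n(\g)$ converges to a fixed point while $\C_{g^n}(\g)$ does not stabilize to $0$, since each new power introduces fresh break points near the accumulation point.

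\smallskip

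So I do not expect any real obstacle here; the only care needed is bookkeeping of the break points of $\wt A$, $\wt B$ and their inverses (all recorded in \secref{sec:change}), and the elementary observation that a piecewise-linear homeomorphism cannot have a break point at a point it fixes without violating the cocycle identity \eqref{eq:cn}. The cleanest write-up is the one-line contradiction at the fixed point $\g=0$ of $\wt B$; I would present that, and perhaps append the "no break at a fixed point" reformulation as a remark, since it is the conceptual content and is what makes the analogous statement transparent for general subgroups of $PLF(\R)$.
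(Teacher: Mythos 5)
Your proof is correct, and it is in fact more direct than the paper's. The paper first uses the generator $\wt A$: since $\C_{\wt A}=\vn$, a fixed configuration $\C$ would have to satisfy $\C=\mathbf{S}^{\wt A}\C$, i.e.\ be $1$-periodic on $\G\cong\Zd$; it then shows that the equation $\C=\C_{\wt B}+\mathbf{S}^{\wt B}\C$, with $\C_{\wt B}=-\d_0+\d_2$ and $\wt B$ acting trivially on $\Zd\cap(-\infty,0]$, is incompatible with $1$-periodicity. You bypass the periodicity step entirely: evaluating the fixed-point identity $\C_g(\g)=\C(\g)-\C(g(\g))$ at $g=\wt B$, $\g=0$, where $\wt B(0)=0$ while $\C_{\wt B}(0)=-1$, gives $-1=0$ at once. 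Your reformulation of the underlying principle — a fixed configuration forces $\C_g(\g)=0$ at every fixed point $\g$ of every $g$, so the existence of a single element with a break at one of its own fixed points rules out all fixed points — is the conceptual gain: it makes the statement transparent for other subgroups of $PLF(\R)$, whereas the paper's argument is tied to the specific pair $\wt A,\wt B$. Two small caveats. First, $\wt B(2)=1$, not $2$; this is harmless since your contradiction only uses the relation at $\g=0$. Second, your alternative iteration argument is loose as stated: for a non-fixed starting point the orbit $\wt B^n(\g)$ merely approaches $0$, and since $\C$ is an arbitrary (in no way continuous) function on $\Zd$ this gives no control of $\C(\wt B^n(\g))$; moreover the telescoped sum $\C_{\wt B^n}(\g)$ can vanish for all $n$ at such points. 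The telescoped identity does yield a contradiction, but only when evaluated at the fixed point itself (e.g.\ $\C_{\wt B^n}(0)=-n$), which is just your main argument again. So present the one-line contradiction at $\g=0$ and drop, or make precise, the iteration remark.
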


\begin{proof}
Let $\C\in\Fun(\G,\Z)$ be a fixed point of the action, i.e.,
$$
\C = \C_g + \mathbf{S}^{g}\C \qquad \forall\,g\in\wt F \;.
$$
Since the configuration $\C_{\wt A}$ associated with the generator $\wt A$ of the group~$\wt F$ (see \secref{sec:change}) is empty, the configuration $\C$ must be then invariant with respect to the transformation $\mathbf{S}^{\wt A}$ determined by the action $\mathbf{S}$ \eqref{eq:act}. In other words, $\C$ must be periodic on $\G\cong\Zd$ with period~1.

In the same way, by taking $g=\wt B$ we arrive at the condition
$$
\C = \C_{\wt B} + \mathbf{S}^{\wt B}\C \;,
$$
where
$$
\C_{\wt B} = -\d_0 + \d_2
$$
by the definition of the generator $\wt B$ (see \secref{sec:change}). Since $\wt B$ acts trivially on $\Zd\cap(-\infty,0]$, it implies that $\C$ can not be 1-periodic, whence a contradiction.
\end{proof}

\subsection{Stabilization of configurations}

\begin{thm} \label{thm:conv}
Let $\mu$ be a finitely supported strictly non-degenerate probability measure on the group $\wt F$. Then for a.e.\ sample path $(g_n)$ of the random walk $(\wt F,\mu)$ the configurations $\C_n=\C_{g_n}$ pointwise converge to a \textsf{limit $\Z$-valued configuration} $\C_\infty\in\Fun(\G,\Z)$, and the space $\Fun(\G,\Z)$ endowed with the arising \textsf{limit distribution} $\la$ is a non-trivial $\mu$\textsf{-boundary}. Therefore, the Poisson boundary $(\p_\mu \wt F,\nu)$ itself is also non-trivial.
\end{thm}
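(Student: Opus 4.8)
The plan is to mimic the lamplighter argument of \secref{sec:stablamp}, using the recursion \eqref{eq:cn2} in place of the second line of \eqref{eq:llproduct}, and then to deduce non-triviality of the $\mu$-boundary from \lemref{lem:fix}.

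\emph{Step 1: convergence of the configurations.} Writing $g_n = h_1 h_2 \cdots h_n$ for the product of the $\mu$-distributed increments, iterate \eqref{eq:cn2} to get
$$
\C_{g_n} = \C_{h_1} + \mathbf{S}^{h_1}\C_{h_2} + \mathbf{S}^{h_1 h_2}\C_{h_3} + \dots + \mathbf{S}^{h_1\cdots h_{n-1}}\C_{h_n} \;,
$$
so that $\C_{g_{n+1}} - \C_{g_n} = \mathbf{S}^{g_n}\C_{h_{n+1}}$. Since $\mu$ is finitely supported, there is a finite set $A\subset\G$ containing the break-point sets $\supp\C_h$ of all $h\in\supp\mu$, and by \eqref{eq:act} the support of $\mathbf{S}^{g_n}\C_{h_{n+1}}$ is contained in $g_n^{-1}(A) = A.g_n^{-1}$, i.e.\ in the preimage of $A$ under the $n$-th position acting on $\G$. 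By \thmref{thm:transall} the induced random walk $(\G,\mu)$ is transient, hence for a.e.\ sample path and every fixed $\g\in\G$ the orbit $\{\g.g_n\}$ visits $A$ only finitely often; equivalently, for each $\g$ the increment $\mathbf{S}^{g_n}\C_{h_{n+1}}(\g)$ is nonzero only finitely often. Therefore $\C_n(\g)$ is eventually constant in $n$ for every $\g$, and the pointwise limit $\C_\infty\in\Fun(\G,\Z)$ exists a.s. (One must be slightly careful that "visits $A$ finitely often" holds simultaneously for all $\g$; since $A$ is finite and transience gives each of the finitely many relevant orbit-points escaping to infinity, a countable union of null sets argument suffices, or one notes directly that on $\{\g.g_n \to \infty\ \forall \g\}$ — a full-measure event — each coordinate stabilizes.)

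\emph{Step 2: equivariance and the $\mu$-boundary.} The map $\mathbf{bnd}\colon \gg=(g_n)\mapsto \C_\infty$ is defined on a.e.\ path and is measurable. It is $\wt F$-equivariant with respect to the skew action \eqref{eq:newact}: indeed, for $g\in\wt F$ the shifted path is $(g g_n)$, and by \eqref{eq:cn2} $\C_{g g_n} = \C_g + \mathbf{S}^{g}\C_{g_n}$, so passing to the limit $\mathbf{bnd}(g\gg) = \C_g + \mathbf{S}^g\,\mathbf{bnd}(\gg)$. Hence the image measure $\la$ of $\P$ under $\mathbf{bnd}$ is $\mu$-stationary, and $(\Fun(\G,\Z),\la)$ is a $\mu$-boundary (an equivariant quotient of the Poisson boundary), exactly as in \secref{sec:stablamp}.

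\emph{Step 3: non-triviality.} It remains to show $\la$ is not a point mass. Suppose it were, concentrated at some $\C\in\Fun(\G,\Z)$. Stationarity of $\la$ means $\mu * \la = \la$, i.e.\ $\sum_g \mu(g)\,\delta_{\C_g + \mathbf{S}^g\C} = \delta_\C$, which forces $\C_g + \mathbf{S}^g\C = \C$ for every $g\in\supp\mu$. Since $\mu$ is strictly non-degenerate, $\supp\mu$ generates $\wt F$ as a semigroup, and using \eqref{eq:cn2} one checks the fixed-point relation propagates to all of $\wt F$: if $\C_g + \mathbf{S}^g\C = \C$ and $\C_h + \mathbf{S}^h\C = \C$ then $\C_{gh} + \mathbf{S}^{gh}\C = \C_g + \mathbf{S}^g(\C_h + \mathbf{S}^h\C) = \C_g + \mathbf{S}^g\C = \C$. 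Thus $\C$ would be a fixed point of the action \eqref{eq:newact}, contradicting \lemref{lem:fix}. Hence $\la$ is non-trivial, the $\mu$-boundary $(\Fun(\G,\Z),\la)$ is non-trivial, and a fortiori so is the Poisson boundary $(\p_\mu\wt F,\nu)$.

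\emph{Main obstacle.} The only genuinely delicate point is Step 1 — ensuring that finitely many visits to the "active set" $A$ holds in a way that yields pointwise stabilization at \emph{every} vertex $\g$ simultaneously, almost surely. This is where finite support of $\mu$ (giving a single finite $A$) and transience of the induced walk (\thmref{thm:transall}) are both essential, and it is the exact analogue of the lamplighter stabilization argument of \cite{Kaimanovich-Vershik83, Kaimanovich83a}; Steps 2 and 3 are then formal, with \lemref{lem:fix} doing the real work in Step 3.
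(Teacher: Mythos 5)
Your proposal is correct and follows essentially the same route as the paper's own proof: stabilization of $\C_n(\g)$ via finite support of $\mu$ together with the transience of the induced walk (\thmref{thm:transall}), $\mu$-stationarity of the limit distribution under the skew action \eqref{eq:newact}, and non-triviality via \lemref{lem:fix}. Your Steps 1--3 merely spell out details (simultaneous stabilization over the countable vertex set, propagation of the fixed-point relation from $\supp\mu$ to $\sgr\mu=\wt F$) that the paper leaves implicit.
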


\begin{proof}
By formula \eqref{eq:cn}, for any $\g\in\G$
$$
\C_{n+1} (\g) = \C_n (\g) + \C_{h_{n+1}}(\g.g_n) \;,
$$
where, as usual, $(h_n)$ is the sequence of increments of the random walk~$(g_n)$. Since $\supp\mu$ is finite, the supports of all configurations $\C_h,\,h\in\supp\mu$, are contained in a certain finite set $A\subset\G$. Therefore, by \thmref{thm:transall} the sequence $(\C_n(\g))$ almost surely stabilizes for any $\g\in\G$, i.e., it converges to the limit configuration
$$
\C_\infty = \lim_{n} \C_n = \C_{h_1} + \mathbf{S}^{g_1}\C_{h_2} + \mathbf{S}^{g_2}\C_{h_3} + \dots \;.
$$
Then the resulting measure $\nu$ on the space $\Fun(\G,\Z)$ is $\mu$-stationary with respect to the action \eqref{eq:newact}, cf. \secref{sec:stablamp} for the lamplighter case.

Now it remains to show that the limit configuration $\C_\infty$ can not be the same for a.e.\ sample path $(g_n)$, i.e., that there does not exist a configuration $\C\in\Fun(\G,\Z)$ such that almost surely $\C_\infty=\C$. Indeed, since $\sgr\mu=\wt F$, if this were the case, then the limit configuration $\C$ would have necessarily been a fixed point of the action \eqref{eq:newact} of the group $\wt F$ on $\Fun(\G,\Z)$. However, this is impossible by \lemref{lem:fix}.
\end{proof}

\begin{rem} \label{rem:atom}
The limit distribution $\la$ constructed in \thmref{thm:conv} is concentrated on the subset of $\Fun(\G,\Z)$ which consists only of infinitely supported configurations. The reason is that its complement $\fun(\G,\Z)$ is countable, whereas it is well-known that any non-trivial $\mu$-boundary is purely non-atomic, see Kaimanovich \cite{Kaimanovich95}.
\end{rem}

\section{Geometry of the Schreier graph $\G$} \label{s:geom}

\subsection{The graph $\G$} \label{sec:rays}

Our proof of \thmref{thm:transimple} on the transience of the simple random walk on the Schreier graph $\G\cong\Zd$ of the group~$\wt F$ was based on purely probabilistic considerations, and this is the argument we had referred to in \cite{Kaimanovich04p}.

Shortly thereafter Savchuk independently analyzed the geometry of the Schreier graph of the original Thompson group $F$ (endowed with the standard set of generators) on the dyadic-rational orbit in the interval $[0,1]$, and obtained its complete description \cite{Savchuk10} (also see \cite{Savchuk15} for the Schreier graphs on the other orbits of the canonical action of the group $F$). Since the dyadic-rational orbit of our group $\wt F$ is precisely the image of the dyadic-rational orbit of the original group $F$ under the coordinate change \eqref{eq:t} (see \secref{sec:change}), this Schreier graph is isomorphic to our graph $\G$.

\figref{fig:gr} is a somewhat modified version of the picture of the graph $\G$ which first appeared in \cite{Savchuk10} (I am most grateful to Dmytro Savchuk for sharing his graphics source files with me).

\begin{figure}
\psfrag{o1}[][]{$\frac1{16}$}
\psfrag{o2}[][]{$\frac18$}
\psfrag{o3}[][]{$\frac3{16}$}
\psfrag{o4}[][]{$\frac14$}
\psfrag{o5}[][]{$\frac5{16}$}
\psfrag{o6}[][]{$\frac38$}
\psfrag{o7}[][]{$\frac7{16}$}
\psfrag{o8}[][]{$\frac12$}
\psfrag{o9}[][]{$\frac9{16}$}
\psfrag{o10}[][]{$\frac58$}
\psfrag{o11}[][]{$\frac{11}{16}$}
\psfrag{o12}[][]{$\frac34$}
\psfrag{o13}[][]{$\frac{13}{16}$}
\psfrag{o14}[][]{$\frac78$}
\psfrag{o15}[][]{$\frac{15}{16}$}
\psfrag{o16}[][]{$1$}
\psfrag{e1}[][]{$\frac98$}
\psfrag{e2}[][]{$\frac54$}
\psfrag{e3}[][]{$\frac{11}8$}
\psfrag{e4}[][]{$\frac32$}
\psfrag{e5}[][]{$\frac{13}8$}
\psfrag{e6}[][]{$\frac74$}
\psfrag{e7}[][]{$\frac{15}8$}
\includegraphics{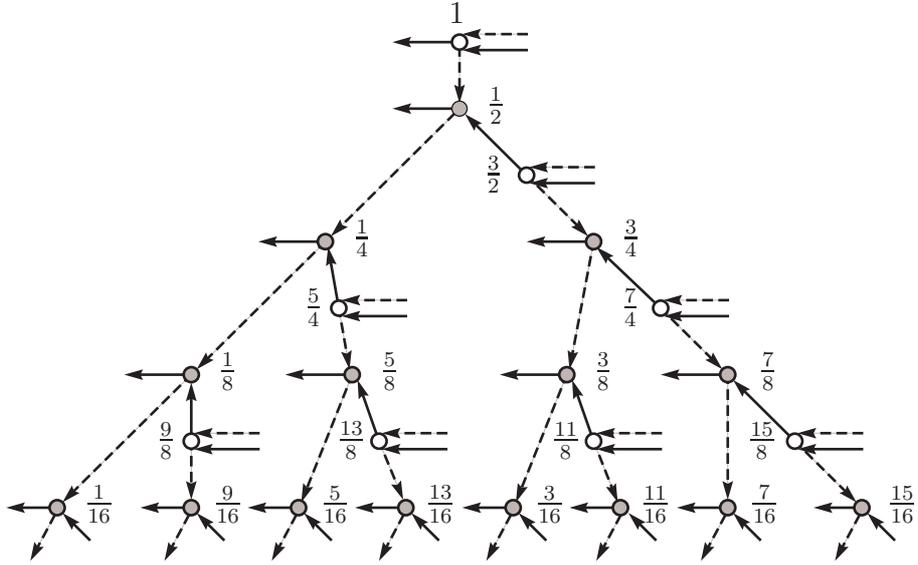}
\caption{The Schreier graph $\G$ of Thompson's group $F\cong\wt F$ on the dyadic-rational orbit in $\R$.} \label{fig:gr}
\end{figure}

\begin{figure}
\psfrag{o}[][]{$\g$}
\psfrag{o1}[][]{$\g-1$}
\psfrag{o2}[][]{$\g-2$}
\psfrag{o3}[][]{$\g-3$}
\includegraphics{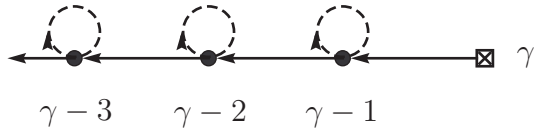}
\caption{Negative type rays in the Schreier graph $\G$.} \label{fig:neg}
\end{figure}

\begin{figure}
\psfrag{o}[][]{$\g$}
\psfrag{o1}[][]{$\g+1$}
\psfrag{o2}[][]{$\g+2$}
\psfrag{o3}[][]{$\g+3$}
\includegraphics{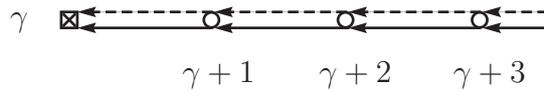}
\caption{Positive type rays in the Schreier graph $\G$.} \label{fig:pos}
\end{figure}

The labelling of the vertices of $\G$ on our \figref{fig:gr} corresponds to the action of $\wt F$ on $\G\cong\Zd$ rather than to the original action of the group $F$ on $\Zd\cap (0,1)$ as in \cite{Savchuk10}. Following \cite{Savchuk10}, we use three different colors for the vertices of the graph $\G$ in order to better exhibit its structure: black for $\g\in\Zd\cap (-\infty,0]$, grey for $\g\in\Zd\cap (0,1)$, and white for $\g\in\Zd\cap [1,\infty)$.

The solid (respectively, dashed) arrows represent the oriented edges in $\G$ corresponding to the generator $\wt A$ (respectively, $\wt B$), see \secref{sec:change}. As is customary for Cayley and Schreier graphs, labels are assigned to \emph{oriented} edges, so that the label of the same edge endowed with the opposite orientation is the letter of the alphabet $\{ \wt A, \wt A^{-1}, \wt B, \wt B^{-1} \}$ inverse to the label of the original edge.

The ``open arrows'' $\leftarrow$ (respectively, $\displaystyle{\substack{\leftarrow\!\!\relbar\\[-.85em] \dashleftarrow }}$) on \figref{fig:gr} represent the infinite \textsf{rays of negative} (respectively, \textsf{positive}) \textsf{type}, see \figref{fig:neg} and \figref{fig:pos}, respectively. These rays are the connected components of the subgraph of~$\G$ with the vertex set $(-\infty,0]\cap\Zd$ (respectively, $[2,\infty)\cap\Zd$) attached to the starting points from the set $(0,1]\cap\Zd$ (respectively, $[1,2)\cap\Zd$). The only vertex of $\G$, to which two rays are attached (both a negative type and a positive type ones), is 1. Note that by the presence of these two families of rays the structure of the graph $\G$ is really begging for applying the coordinate change \eqref{eq:t}.

\subsection{The skeleton} \label{sec:skel}

The \textsf{skeleton} $\ov\G$ of the Schreier graph $\G$ is the tree obtained by removing from $\G$
all the rays of negative and positive types, see \figref{fig:sk}. The tree $\ov\G$ is obviously \emph{roughly isometric} to the usual \emph{rooted binary tree}. Indeed, let $B$ be the 
\textsf{binary tree} which consists just of the grey vertices of $\ov\G$ (i.e., of the points from $(0,1)\cap\Zd$). Then $\ov\G$ and $B$ differ only by the presence of an additional edge (the one between the vertices $1$ and $\frac12$) and of additional white vertices which subdivide some of the edges of $B$ (into two halves.

\bigskip

\begin{center}
\psfrag{o1}[][]{$\frac1{16}$}
\psfrag{o2}[][]{$\frac18$}
\psfrag{o3}[][]{$\frac3{16}$}
\psfrag{o4}[][]{$\frac14$}
\psfrag{o5}[][]{$\frac5{16}$}
\psfrag{o6}[][]{$\frac38$}
\psfrag{o7}[][]{$\frac7{16}$}
\psfrag{o8}[][]{$\frac12$}
\psfrag{o9}[][]{$\frac9{16}$}
\psfrag{o10}[][]{$\frac58$}
\psfrag{o11}[][]{$\frac{11}{16}$}
\psfrag{o12}[][]{$\frac34$}
\psfrag{o13}[][]{$\frac{13}{16}$}
\psfrag{o14}[][]{$\frac78$}
\psfrag{o15}[][]{$\frac{15}{16}$}
\psfrag{o16}[][]{$1$}
\psfrag{e1}[][]{$\frac98$}
\psfrag{e2}[][]{$\frac54$}
\psfrag{e3}[][]{$\frac{11}8$}
\psfrag{e4}[][]{$\frac32$}
\psfrag{e5}[][]{$\frac{13}8$}
\psfrag{e6}[][]{$\frac74$}
\psfrag{e7}[][]{$\frac{15}8$}
\includegraphics{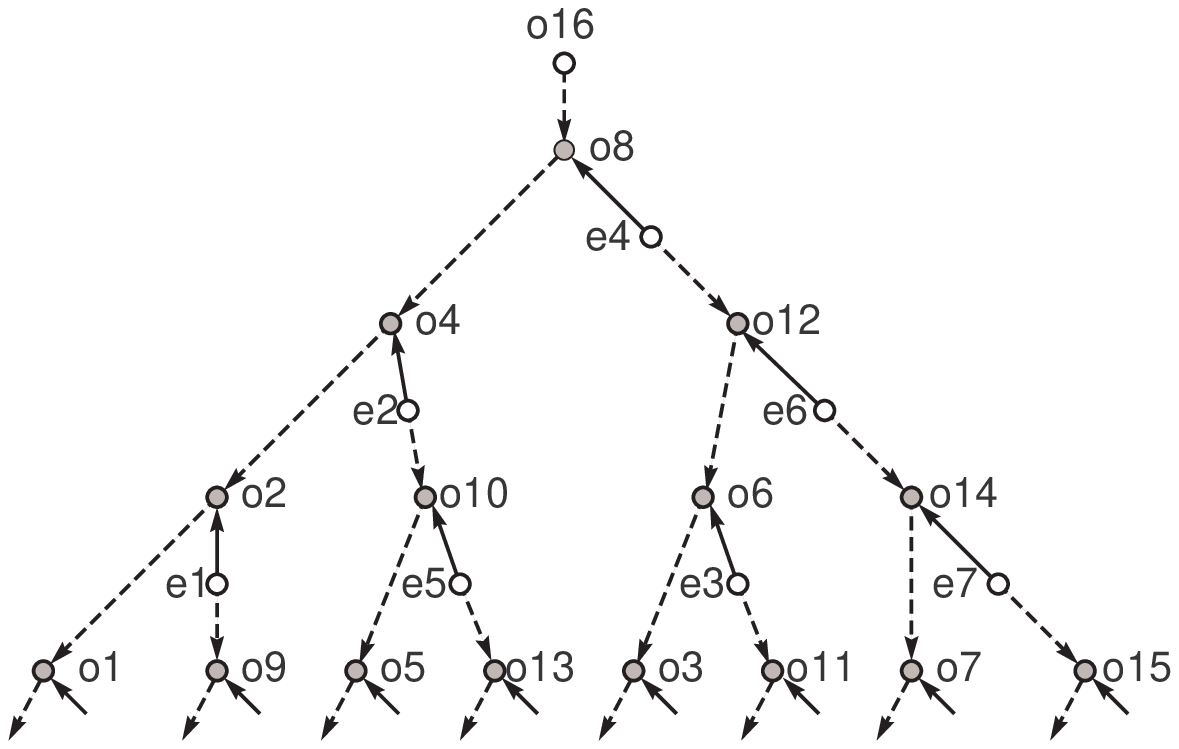}
\captionof{figure}{The skeleton $\ov\G$ of the Schreier graph $\G$.} \label{fig:sk}
\end{center}

\subsection{Transience of $\G$} \label{sec:trans}

Since the rays removed when passing from $\G$ to $\ov\G$ are clearly recurrent for the simple random walk (we have already referred to this property in the proof of \thmref{thm:trans}), the transience of the simple random walk on the Schreier graph is then a direct consequence of the classically known transience of the rooted binary tree (e.g., see Woess \cite{Woess00}). This argument is self-evident, once the geometry of the Schreier graph $\G$ has been exhibited. Although the statement about the transience of $\G$ does not explicitly appear in Savchuk's paper \cite{Savchuk10}, its author was aware of it and mentioned it in some of his talks given at that time.

\subsection{Space of ends of $\G$}

Let $\p\G$ denote the \textsf{space of ends} of the graph $\G$. It obviously splits into a disjoint union
\begin{equation} \label{eq:union}
\p\G = \p\ov\G \cup \p_-\G \cup \p_+\G \;.
\end{equation}
Here $\p_-\G$ and $\p_+\G$ are the infinite sets of ends determined by the rays of the negative and the positive types, respectively, and $\p\ov\G$ is the space of ends of the skeleton $\ov\G$.

The sets $\p_-\G$ and $\p_+\G$ can be identified with the respective subsets of the skeleton $\ov\G$, to which the corresponding rays are attached, i.e., with $\Zd\cap (0,1]$ and with $\Zd\cap [1,2)$, see \secref{sec:rays}. Note that the topology of the union \eqref{eq:union} is the same as that of the \emph{end compactification} of the skeleton $\ov\G$ (with the only difference that the point $1\in\ov\G$ appears with multiplicity 2).

In what concerns $\p\ov\G$, as a topological space it can be easily identified with the \textsf{space $\Z_2^{(1)}=\Z_2\setminus 2\Z_2$ of 2-adic integers of norm 1}. For doing that let us first notice that the downward branchings in the ``grey'' binary tree $B$ (see \secref{sec:skel}) consist in applying the maps
$$
f_0:\g\mapsto\frac{\g}2 \quad\text{and}\quad f_1:\g\mapsto\frac{\g}2+\frac12 \;,
$$
or, equivalently,
$$
f_\e:\g \mapsto \frac{\g}2+\e\frac12 \quad\text{with}\quad \e\in\{0,1\} \;.
$$
Therefore, any geodesic ray $\boldsymbol{\g}=(\g_1,\g_2,\dots)$ in the tree $B$ issued from the point $\g_1=\frac12$ can be encoded by the corresponding sequence $\boldsymbol{\e}=(\e_1,\e_2,\dots)$, where $\e_n$ are uniquely determined by the condition
$$
\begin{aligned}
\g_{n+1} &= f_{\e_n} (\g_n) \\
&= \frac{\e_n}2 + \frac{\e_{n-1}}4 + \frac{\e_{n-2}}8 + \dots + \frac{\e_2}{2^{n-1}} + \frac{\e_1}{2^n} + \frac1{2^{n+1}} \;, \qquad n\ge 1 \;,
\end{aligned}
$$
so that
$$
|\g_n|_2 = 2^n \qquad\forall\,n\ge 1 \;.
$$
Then the sequence
$$
|\g_n|_2 \cdot \g_n = 1 + \e_1 \cdot 2 + \e_2 \cdot 4 + \dots + \e_{n-1} \cdot 2^{n-1}
$$
obviously converges in the 2-adic topology to the 2-adic integer
$$
1 + \sum_{n=1}^\infty \e_n \cdot 2^n
$$
of norm 1. Conversely, any 2-adic integer of norm 1 gives rise to the corresponding geodesic $\boldsymbol{\g}=(\g_1,\g_2,\dots)$ in the tree $B$, i.e., to an end of $\ov\G$.

\subsection{Sections of the end bundle}

By
$$
\Fun(\G,\p\G)\cong(\p\G)^\G
$$
we shall denote the space of \textsf{$\p\G$-valued configurations} on $\G$, or, equivalently, of \textsf{sections of the end bundle}
$$
\G\times\p\G\to\G \;,\qquad (\g,\w)\to\g \;,
$$
over the graph $\G$ (cf. the definitions of the \emph{hyperbolic boundary bundles} and of the \emph{Poisson bundles} in author's papers \cite{Kaimanovich04} and \cite{Kaimanovich05a}, respectively). One can also consider the configurations from $\Fun(\G,\p\G)$ as \textsf{end fields} on $\G$. In the same way as the space $\Fun(\G,\Z)$ of $\Z$-valued configurations on $\G$, the space $\Fun(\G,\p\G)$ is endowed with the left action \eqref{eq:act} of the group $\wt F$.

\subsection{Partial isometries of $\G$} \label{sec:partial}

Generally speaking, groups do not act on their Schreier graphs by graph automorphisms (here and below when talking about graph automorphisms or isomorphisms we mean the maps which preserve the Schreier labelling of edges). In particular, in our case the graph $\G$ and its skeleton $\ov\G$ are both \textsf{rigid}, i.e., their groups of automorphisms are trivial. However, in spite of that the graph $\G$ still has certain symmetry properties sufficient for our purposes. Namely, it has a rather rich set of \textsf{partial isomorphisms}, i.e., of graph isomorphisms between its subgraphs.

First of all, obviously, all the rays of negative (respectively, of positive) type are pairwise isomorphic. Further, for a tree $T$ and any two vertices $o\neq o'\in T$ let $T_{o\to o'}$ denote the \textsf{shadow} of the vertex $o'$ as seen from the vertex $o$, i.e., the subtree of $T$ which consists of all vertices $x\in T$ such that $o'$ lies on the geodesic $[o,x]$. Then the skeleton $\ov\G$ is \textsf{self-similar} in the sense that the shadows $\ov\G_{1\to\frac12}$ and $\ov\G_{1\to\g}$ are isomorphic for any $\g\in\Zd\cap(0,1)$ (i.e., for any ``grey'' vertex $\g\in\ov\G$). If now $\G_{1\to\g}$ denotes the subgraph of $\G$ obtained by reattaching all the negative and positive type rays to the vertices of the shadow $\ov\G_{1\to\g}\subset\ov\G$, then the subgraphs $\G_{1\to\g}\subset\G$ are all pairwise isomorphic for $\g\in\Zd\cap(0,1)$.

\section{Non-trivial behaviour at infinity determined by the boundary of the Schreier graph} \label{s:ends}

\subsection{Mishchenko's work}

In a recent preprint \cite{Mishchenko15} Mishchenko gives yet another proof of the transience of the Schreier graph $\G$ different from the proofs described in \secref{sec:srw} and \secref{sec:trans} (it is based on an explicit estimate of the Dirichlet norm on $\G$). He further notices that due to a specific geometry of $\G$ this transience implies existence of a non-trivial behaviour at infinity ($\equiv$ non-triviality of the Poisson boundary) for the simple random walk on $\G$, and therefore for the simple random walk on Thompson's group $F$ as well (cf. the discussion of the relationship between the simple random walks on $\G$ and on $F\cong\wt F$ in \secref{sec:srw}). Thus, the Poisson boundary of the simple random walk on Thompson's group $F$ is non-trivial \cite[Theorem~2.5]{Mishchenko15}, which provides another proof of our result on the absence of the Liouville property for the group $F$.

The aforementioned geometric argument in \cite{Mishchenko15} is based on the following observation (Theorem~2.3), which we shall quote here in a slightly modified form:

\begin{quote}\small
If a tree $T$ has the property that there exists a vertex $o\in T$ of degree at least 2, and such that for any neighbour $o'$ of $o$ the shadow $T_{o\to o'}$ is transient, then the Poisson boundary of the simple random walk on $T$ is non-trivial.
\end{quote}

In fact, it is classically known (e.g., see Woess \cite{Woess00}) that the transience of the simple random walk on a connected graph implies convergence of its sample paths to an \emph{end} of this graph. Moreover, the same is true for any bounded range random walk, and, as we have already established in \thmref{thm:transall}, the random walk on $\G$ determined by any strictly non-degenerate probability measure $\mu$ on $\wt F$ is transient.

Yet another point is that the boundary behaviour provided by the above argument from \cite{Mishchenko15}
depends on the starting point $\g\in\G$ of the induced random walk, so that the resulting quotient of the Poisson boundary is not a $\mu$-boundary. Actually, the quotient of the Poisson boundary produced by \cite[Theorem 2.3]{Mishchenko15} is finite (it can be identified with the set of neighbours of the vertex $o$), so that it can not be a $\mu$-boundary already for this reason (for, as we have already mentioned in \remref{rem:atom}, any non-trivial $\mu$-boundary is purely non-atomic \cite{Kaimanovich95}).

\subsection{Non-trivial $\mu$-boundary}

\begin{thm} \label{thm:conv2}
Let $\mu$ be a strictly non-degenerate finitely supported probability measure on the group $\wt F$. Then for a.e.\ sample path $\gg=(g_n)$ of the random walk $(\wt F,\mu)$ and any point $\g\in\G\cong\Zd$ the sequence $\g.g_n$ converges to an end $\w_\g=\w_\g(\gg)\in\p\G$, which gives rise to a map
$$
\gg \mapsto (\w_\g)_{\g\in\G}\in\Fun(\G,\p\G)
$$
from the path space $(\wt F^{\Z_+},\P)$ of the random walk $(\wt F,\mu)$ to the configuration space $\Fun(\G,\p\G)$, and the space $\Fun(\G,\p\G)$ endowed with the arising probability measure $\la$ (the image of the measure $\P$ on the path space under the above map) is a non-trivial $\mu$-boundary.
\end{thm}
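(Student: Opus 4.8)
The plan is to build the $\mu$-boundary in three movements: convergence, equivariance, and non-triviality. For the first, fix a starting point $\g\in\G$. By \thmref{thm:transall} the induced random walk $(\G,\mu)$ is transient for our strictly non-degenerate finitely supported $\mu$, and since $\supp\mu$ is finite the induced walk has bounded range on $\G$. A transient bounded-range random walk on a connected locally finite graph almost surely converges to an end in the end compactification (see Woess \cite{Woess00}); applying this and discarding a $\P$-null set for each of the countably many $\g\in\G$, we get that $\P$-a.s.\ the sequence $\g.g_n=\g.\gg$ converges to an end $\w_\g(\gg)\in\p\G$ simultaneously for all $\g$. This defines the map $\gg\mapsto(\w_\g(\gg))_{\g\in\G}\in\Fun(\G,\p\G)$, and we let $\la$ be the pushforward of $\P$.

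For equivariance, recall that $\wt F$ acts on $\Fun(\G,\p\G)$ via the left action \eqref{eq:act}, $\mathbf S^h\f(\g)=\f(\g.h)$, and on the path space coordinatewise by left translation $\gg=(g_n)\mapsto h\gg=(hg_n)$. Using the postfix convention \eqref{eq:comp} one has $\g.(hg_n)=(\g.h).g_n$, so the limit end of $h\gg$ started at $\g$ equals the limit end of $\gg$ started at $\g.h$; that is, the boundary map intertwines left translation on the path space with $\mathbf S$ on $\Fun(\G,\p\G)$. Hence $\la$ is $\mu$-stationary, and since the boundary map is a shift-invariant $\wt F$-equivariant measurable map from $(\wt F^{\Z_+},\P)$, the space $(\Fun(\G,\p\G),\la)$ is an equivariant quotient of the Poisson boundary, i.e.\ a $\mu$-boundary. (One should check measurability of $\gg\mapsto\w_\g(\gg)$, which is routine: it is a pointwise limit of the measurable maps $\gg\mapsto\g.g_n$, and the end compactification of a locally finite graph is a compact metrizable space.)

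The substantive step — and the main obstacle — is showing that $\la$ is non-trivial, i.e.\ that the end field $(\w_\g)_{\g\in\G}$ is genuinely random. Here I would exploit the partial isomorphisms of $\G$ described in \secref{sec:partial}: although $\operatorname{Aut}(\G)$ is trivial, the subgraphs $\G_{1\to\g}$ for $\g\in\Zd\cap(0,1)$ are pairwise isomorphic, and there are at least two disjoint such shadows hanging off a branching vertex of the skeleton. The idea is that with positive probability the induced walk from a fixed base point (say $0$) stays within, and escapes to infinity inside, one particular shadow $\G_{1\to\g}$, and with positive probability inside a disjoint one; these two events force the limit end $\w_0$ to lie in disjoint closed subsets of $\p\G$, so $\w_0$ is non-deterministic under $\P$. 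Concretely I would argue: by transience the hitting probability of the branching vertex and then of a prescribed neighbour is positive, after which, by comparison (\thmref{thm:r}) applied inside the shadow together with the transience of that shadow, there is positive probability of never returning, hence of converging to an end of that shadow. Partial isomorphism transports this to the disjoint shadow. Since the two resulting events are each of positive $\P$-measure and produce disjoint end-values, $\la$ cannot be a point mass. By the Poisson-formula characterization recalled in \secref{sec:prelim}, a non-trivial $\mu$-boundary forces the Poisson boundary $(\p_\mu\wt F,\nu)$ itself to be non-trivial. The delicate points to nail down are the measurability and the positivity of the two escape events, plus verifying that the comparison criterion really does apply to the walk restricted to a shadow with an appropriate stationary measure.
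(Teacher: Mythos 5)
Your convergence step and the equivariance/stationarity step are fine and agree with the paper: \thmref{thm:transall} plus the bounded range of the induced walk give a.s.\ convergence $\g.g_n\to\w_\g$, and the map $\gg\mapsto(\w_\g)_\g$ intertwines left translation on the path space with the action \eqref{eq:act}, so $(\Fun(\G,\p\G),\la)$ is a $\mu$-boundary. The gap is in the pivotal non-triviality step. You claim that, once the walk has entered a shadow $\G_{1\to\g}$, ``by comparison (\thmref{thm:r}) applied inside the shadow together with the transience of that shadow, there is positive probability of never returning, hence of converging to an end of that shadow.'' This is a non sequitur: \thmref{thm:r} transfers \emph{transience} (finiteness of the Green function) between two operators on the \emph{same} state space with a common stationary measure; it says nothing about the probability that the $\mu$-induced walk on all of $\G$ avoids the complement of a subgraph forever. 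Transience of a subgraph for the simple random walk does not imply that a transient bounded-range walk started inside it remains inside with positive probability: on the graph obtained by attaching a ray to the root of a binary tree, a nearest-neighbour chain with drift towards the root on the tree and away from the root on the ray is transient, the tree is a transient subgraph, and yet the walk exits the tree almost surely. In the present situation the confinement statement is true, but only because of the specific self-similar structure of $\G$ (every shadow and every ray has disjoint isomorphic copies), and that is precisely what has to carry the proof; your proposed mechanism does not supply it. (A smaller slip: positivity of the probability of reaching a prescribed vertex from $0$ comes from strict non-degeneracy, i.e.\ irreducibility of the induced walk, not from transience.)

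The paper avoids proving any a priori escape-within-a-shadow estimate by arguing by contradiction: if $\la$ were a point mass, strict non-degeneracy forces the limit configuration to be constant $\equiv\w$; since a.e.\ path from every starting point then converges to this single end $\w$, the set of paths eventually -- hence, by the Markov property, from a suitable starting vertex entirely -- confined to the ray containing $\w$ (when $\w\in\p_-\G\cup\p_+\G$), or to $\G_{1\to\frac12}$ (when $\w\in\p\ov\G$), is $\Q_\#$-non-negligible; transporting this confinement event through a partial isomorphism of \secref{sec:partial} onto a disjoint isomorphic ray, respectively onto a shadow $\G_{1\to\g}$ with $\w\notin\p\G_{1\to\g}$, produces with positive probability a limit end different from $\w$, a contradiction. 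If you prefer your direct formulation (showing $\w_0$ is genuinely random), the comparison step must be replaced by the same kind of confinement argument: a.e.\ path converges to \emph{some} end, every end of $\G$ other than the ends of the two rays attached at the vertex $1$ lies in $\p\G_{1\to\frac12}$, so with positive probability the walk is eventually confined either to a single ray or to $\G_{1\to\frac12}$, and the partial isomorphisms then yield two disjoint closed sets of ends each charged by $\w_0$. Some version of this argument, not \thmref{thm:r}, is what has to do the work.
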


\begin{proof}
As we have already noticed, \thmref{thm:transall} in combination with the fact that the support of $\mu$ is finite implies the convergence
$$
\g.g_n\to\w_\g\in\p\G \qquad \text{for all}\; \g\in\G \; \text{and a.e.\ sample path}\; (g_n) \;.
$$
Thus, it only remains to show that the distribution of the configurations $(\w_\g)_{\g\in\G}$ is not a point measure.

First of all let us notice that if the configuration $(\w_\g)$ were the same for a.e.\ sample path $(g_n)$, then it would necessarily have been constant (i.e., taking the same value at all points of $\G$), because the group $\wt F$ acts on $\Fun(\G,\p\G)$ by ``changing'' the $\G\cong\Zd$ variable, and $\sgr\mu=\wt F$.

Now let us assume that $\w_\g\equiv\w\in\p\G$. It means that for any starting point $\g\in\G$ almost every sample path of the random walk $(\G,\mu)$ \eqref{eq:rwg} converges to $\w$. Let us denote by $\Q_\#$ the measure on the space of sample paths of this random walk whose initial distribution is the counting measure $\#$ on $\G$.

If $\w\in\p_-\G\cup\p_+\G$ is the end corresponding to a ray of the negative or of the positive type, then there is a $\Q_\#$-non-negligible set of sample paths which are entirely contained in this ray. However, since all the rays of the same type are isomorphic (see \secref{sec:partial}), the same would be true for any other ray of the same type, which means that with positive probability the limit end would be different from $\w$.

If $\w\in\p\ov\G$ is a skeleton end, then, in the same way, the set of the sample paths entirely contained in the subgraph $\G_{1\to\frac12}$ is $\Q_\#$-non-negligible. In view of the self-similarity of the graph $\G$ (see \secref{sec:partial}) it implies that the same would be true for any subgraph $\G_{1\to\g}$. However, one can obviously choose $\g$ in such a way that $\w\notin\p\G_{1\to\g}$.
\end{proof}

The construction of a $\mu$-boundary of the group $F$ on the configuration space $\Fun(\G,\Z)$ from \thmref{thm:conv} heavily used the specifics of this group. On the contrary, the above construction of a $\mu$-boundary on the space of sections $\Fun(\G,\p\G)$ of the end bundle over the Schreier graph~$\G$ is much more general.

\begin{thm} \label{thm:gen}
If a Schreier graph $\G$ of a finitely generated group $G$ is transient (with respect to the simple random walk on it), then for any strictly non-degenerate finitely supported probability measure $\mu$ on $G$ either
\begin{itemize}
\item[(i)]
there exists an end $\w\in\p\G$ such that for any $\g\in\G$ a.e.\ sample path of the induced random walk $(\G,\mu)$ issued from $\g$ converges to $\w$,
\end{itemize}
or
\begin{itemize}
\item[(ii)]
the space of sections $\Fun(\G,\p\G)$ of the end bundle over $\G$ is a non-trivial $\mu$-boundary, so that the Poisson boundary of the random walk $(G,\mu)$ is also non-trivial.
\end{itemize}
\end{thm}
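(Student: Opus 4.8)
The plan is to imitate the proof of \thmref{thm:conv2} almost verbatim, isolating the only two places where the specific geometry of the Schreier graph of $\wt F$ was invoked, and showing that in the general case these are exactly the two alternatives (i) and (ii). First, since $\G$ is transient and $\mu$ is finitely supported (hence the induced random walk $(\G,\mu)$ has bounded range and is transient by the comparison argument of \thmref{thm:r}, applied as in the proof of \thmref{thm:transall}), the classical fact that a bounded-range transient random walk on a connected graph converges to an end (see Woess \cite{Woess00}) gives, for every starting point $\g\in\G$ and a.e.\ sample path $\gg=(g_n)$, a limit end $\w_\g=\w_\g(\gg)\in\p\G$. This produces the equivariant map $\gg\mapsto(\w_\g)_{\g\in\G}\in\Fun(\G,\p\G)$ and a hitting measure $\la$ on $\Fun(\G,\p\G)$, which is automatically $\mu$-stationary for the action \eqref{eq:act}. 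So the only issue is the dichotomy: either $\la$ is a point mass, or it is not.

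Next I would analyze the case where $\la$ is a point mass. Exactly as in the proof of \thmref{thm:conv2}, because $\sgr\mu=G$ and $G$ acts on $\Fun(\G,\p\G)$ by precomposition in the $\G$-variable (formula \eqref{eq:act}), a $\la$-a.s.\ constant configuration $(\w_\g)_{\g\in\G}$ must be $G$-invariant, hence take the same value $\w\in\p\G$ at every vertex: for every $\g\in\G$, a.e.\ sample path of $(\G,\mu)$ issued from $\g$ converges to this fixed end $\w$. This is precisely alternative (i). Thus if (i) fails, then $\la$ is not a point mass, and $\Fun(\G,\p\G)$ with the measure $\la$ is a non-trivial $\mu$-boundary, which by the Poisson formula \eqref{eq:P} forces the Poisson boundary $(\p_\mu G,\nu)$ to be non-trivial as well; this is alternative (ii). The logical structure is therefore just ``$\neg$(i) $\Rightarrow$ (ii)'', which is equivalent to ``(i) or (ii)''.

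The point I would flag as needing a word of care — though it is not really an obstacle — is the step asserting that a $\la$-a.s.\ constant end field is genuinely $G$-invariant and hence literally constant on $\G$. In \thmref{thm:conv2} this used that $\G$ is vertex-transitive under the $\wt F$-action, which is true by definition for any Schreier graph: the group acts transitively on the vertex set, so a $G$-equivariant element of $\Fun(\G,\p\G)$ that is $\la$-a.s.\ equal to a fixed configuration $\Psi$ must satisfy $\mathbf{S}^{g}\Psi=\Psi$ for all $g$ with $\sgr\mu=G$, and transitivity then pins down $\Psi(\g)$ to a single value $\w$ independent of $\g$. One should also note that here, unlike in \thmref{thm:conv2}, we do \emph{not} attempt to rule out case (i) — in the concrete graph $\G$ of $\wt F$ the partial isomorphisms of \secref{sec:partial} were what excluded it, and a general Schreier graph need not have them — which is exactly why the conclusion is stated as a dichotomy rather than an unconditional non-triviality statement. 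Everything else (stationarity of $\la$, the passage from a non-trivial $\mu$-boundary to non-triviality of the full Poisson boundary) is word-for-word the argument already given for \thmref{thm:conv2}, so the write-up can be kept short by referring back to it.
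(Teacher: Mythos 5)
Your argument is correct and essentially coincides with the paper's proof: the same transfer of transience to $(\G,\mu)$ via \thmref{thm:r} as in \thmref{thm:transall}, convergence of the bounded-range induced walk to ends of $\G$, and then a dichotomy according to whether the limit distribution on $\Fun(\G,\p\G)$ is a point mass. The only cosmetic difference is that the paper organizes the dichotomy through the hitting measures $\k_\g$, which are pairwise equivalent by strict non-degeneracy (so either all of them are one and the same delta-measure, giving (i), or none is a delta-measure, giving (ii)), whereas you derive the constancy of a point-mass limit configuration from stationarity, $\sgr\mu=G$ and transitivity of the Schreier action --- the same mechanism the paper itself uses in the proof of \thmref{thm:conv2}.
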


\begin{proof}
In the same way as in the proof of \thmref{thm:transall}, the transience of the simple random walk on $\G$ implies the transience of the induced random walk $(\G,\mu)$ for any strictly non-degenerate probability measure~$\mu$ on $G$. Further, if $\mu$ is finitely supported, then a.e.\ sample path of the induced random walk on $\G$ converges to a random end $\w\in\p\G$. Let us denote by $\{\k_\g\}_{\g\in\G}$ the family of the corresponding \textsf{hitting distributions} on the space of ends $\p\G$. By the strict non-degeneracy of $\mu$, all the measures $\k_\g$ are pairwise equivalent. Now, in case (i) all measures $\k_\g$ coincide with the delta-measure $\d_\g$. Otherwise, in case (ii), all measures $\k_\g$ are not delta-measures, so that $\Fun(\G,\p\G)$ endowed with the arising limit distribution is a non-trivial $\mu$-boundary in the same way as in \thmref{thm:conv2}.
\end{proof}

\subsection{Convergence to components of the end space}

The key ingredient of our proof of \thmref{thm:conv2} was the observation that for a finitely supported measure $\mu$ on $\wt F$ the transience of the random walk $(\G,\mu)$ \eqref{eq:rwg} implies the convergence of a.e.\ sample path to a random end of the graph $\G$ (also see \thmref{thm:gen}). By using the homomorphisms
$$
\chi_a,\chi_b:F\cong\wt F\to\Z
$$
introduced in \secref{sec:f} one can easily describe the components of the decomposition \eqref{eq:union} of the space of ends $\p\G$, on which the hitting measures $\k_\g$ of the random walk $(\G,\mu)$ are actually concentrated.

Since the measure $\mu$ is finitely supported, the restriction of the random walk $(\G,\mu)$ to any of the negative (respectively, positive) type rays (see \secref{sec:rays}) coincides, outside of a finite neighbourhood of ray's origin, just with the usual translation invariant random walk on $\Z$. Here and below we identify the negative (respectively, positive) type rays in $\G$ with the \textsf{negative} (respectively, \textsf{positive}) \textsf{integer ray} $\Z_-=\{\dots,-2,-1,0\}$ (respectively, $\Z_+=\{0,1,2,\dots\}$) of $\Z$. As it follows from the description of the graph $\G$ (see \figref{fig:gr}), the step distribution of the induced random walk on $\Z_-$ (respectively, on $\Z_+$) is $(-\chi_a)(\mu)$ (respectively, $(-\chi_a-\chi_b)(\mu)$), where by $\chi(\mu)$ we denote the image of the measure $\mu$ under a group homomorphism $\chi:\wt F\to\Z$. Therefore, the drift of the induced random walk on $\Z_-$ ($\equiv$ on the negative type rays) is the barycentre
$$
\ov{(-\chi_a)(\mu)} = -\a \;,
$$
and the drift of the induced random walk on $\Z_+$ ($\equiv$ on the positive type rays) is the barycentre

$$
\ov{(-\chi_a-\chi_b)(\mu)} = -\a -\b \;,
$$
where
$$
\a = \ov{\chi_a(\mu)} \quad\text{and}\quad \b = \ov{\chi_b(\mu)}
$$
denote the barycentres (the expectations) of the measures $\chi_a(\mu)$ and $\chi_b(\mu)$ on $\Z$, respectively.

It is classically known that the recurrence properties of the random walk on $\Z$ governed by a finitely supported step distribution $\s$ are completely determined by the barycentre $\ov\s$. In particular, the ray $\Z_-$ (respectively, $\Z_+$) is \textsf{transient} (i.e., the random walk escapes to infinity along this ray) if and only if the drift $\ov\s$ is negative (respectively, positive). Therefore, we arrive at the following description of the components of the decomposition \eqref{eq:union}, on which the hitting measures $\k_\g$ are concentrated, in terms of the parameters $\a,\b\in\R$ (also see \figref{fig:ab}):

\begin{prp} \label{prp:cases}
Under the conditions of \thmref{thm:conv2}
\begin{itemize}
\item[(i)] If $\a>0$ and $\a+\b<0$, then both the negative type and the positive type rays are transient, and the hitting measures $\k_\g$ are concentrated on the union $\p_-\G \cup \p_+\G$;
\item[(ii)] If $\a>0$ and $\a+\b\ge 0$, then only the negative type rays are transient, and the hitting measures $\k_\g$ are concentrated on $\p_-\G$;
\item[(iii)] If $a\le 0$ and $\a+\b<0$, then only the positive type rays are transient, and the hitting measures $\k_\g$ are concentrated on $\p_+\G$;
\item[(iv)] If $a\le 0$ and $\a+\b\ge 0$, then neither the negative nor the positive type rays are transient, and the hitting measures $\k_\g$ are concentrated on $\p\ov\G$.
\end{itemize}
\end{prp}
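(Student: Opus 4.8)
The plan is to follow the induced random walk $(\G,\mu)$ from \eqref{eq:rwg} as it escapes to infinity and to decide, in each of the four sign regimes for $\a$ and $\a+\b$, which of the three pieces of the decomposition \eqref{eq:union} it is driven into. By \thmref{thm:transall} the walk $(\G,\mu)$ is transient, and since $\mu$ is finitely supported a.e.\ sample path converges to a (random) end $\w\in\p\G$; write $\{\k_\g\}_{\g\in\G}$ for the associated hitting distributions on $\p\G$. As in the proof of \thmref{thm:gen}, strict non-degeneracy makes all the $\k_\g$ pairwise equivalent, so it suffices to determine, for a.e.\ sample path of the induced walk started at one (hence any) vertex, which component of \eqref{eq:union} contains its limit end.

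The key ingredient is the ray reduction already used above. Fix $M$ so large that every $h\in\supp\mu$ acts as the translation $\g\mapsto\g+C_-(h)$ on $(-\infty,-M)$ and as $\g\mapsto\g+C_+(h)$ on $(M,\infty)$, with $C_-=-\chi_a$ and $C_+=-\chi_a-\chi_b$ (see \secref{sec:slope}). Then, for as long as the induced walk stays in the part of a negative type ray lying below $-M$ (respectively, of a positive type ray lying above $M$), it coincides with the translation invariant random walk on $\Z$ whose step distribution is $(-\chi_a)(\mu)$ (respectively, $(-\chi_a-\chi_b)(\mu)$), and hence whose barycentre is $-\a$ (respectively, $-\a-\b$). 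By the classical recurrence dichotomy on $\Z$: a negative type ray is \emph{escaping} --- the induced walk entering its deep part converges to the corresponding end with probability bounded away from $0$ --- precisely when $\a>0$, and otherwise \emph{non-escaping} --- a.e.\ path from the deep part eventually returns above $-M$, and, the walk being transient on $\G$, a.e.\ excursion into the ray eventually returns to the skeleton. Symmetrically, a positive type ray is escaping iff $\a+\b<0$. From the non-escaping case one reads off immediately the ``easy'' exclusions: if $\a\le0$ the limit end is a.s.\ not in $\p_-\G$, and if $\a+\b\ge0$ it is a.s.\ not in $\p_+\G$.

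It remains to exclude skeleton ends whenever at least one ray family is escaping, i.e.\ whenever $\a>0$ or $\a+\b<0$. Here I would use that $\wt A^{m}\in\wt F=\sgr\mu$ for every $m\in\Z$, so there are $k$ and $c>0$ with $\mu^{*k}(\wt A^{M+1})\ge c$, and similarly $\mu^{*k'}(\wt A^{-M-1})\ge c'$ for suitable $k',c'$. Since $\wt A^{M+1}$ (respectively, $\wt A^{-M-1}$) is the translation $\g\mapsto\g-(M+1)$ (respectively, $\g\mapsto\g+(M+1)$) of all of $\R$, a block of increments equal to $\wt A^{M+1}$ moves the walk from any grey vertex $\g\in(0,1)\cap\Zd$ of the binary subtree $B$ to the point $\g-(M+1)<-M$, i.e.\ into the deep part of a negative type ray, and likewise a block equal to $\wt A^{-M-1}$ carries it above $M$; by the strong Markov property these blocks occur at any prescribed grey-visit time with conditional probability at least $c$ (respectively $c'$). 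Now suppose the limit end lies in $\p\ov\G$. Then the walk visits infinitely many grey vertices --- convergence to a skeleton end means convergence along a geodesic ray of $B$ (see \secref{sec:skel}) --- and, selecting such visit times at least $k$ apart, the conditional Borel--Cantelli lemma shows that a.s.\ infinitely many $\wt A^{M+1}$-blocks occur, hence a.s.\ the walk makes infinitely many excursions into the deep part of negative type rays. If $\a>0$ each such excursion, by the strong Markov property and the escape probability of the ray reduction, fails to escape with probability at most $1-p_0$ for a fixed $p_0>0$ independent of the ray, so a.s.\ one of them escapes and the walk converges to an end of $\p_-\G$ --- contradicting convergence to a skeleton end. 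The same argument with the positive type rays and the blocks $\wt A^{-M-1}$ rules out $\p\ov\G$ when $\a+\b<0$. Assembling the exclusions: in case (i) only $\p\ov\G$ is excluded, in (ii) both $\p\ov\G$ and $\p_+\G$, in (iii) both $\p\ov\G$ and $\p_-\G$, and in (iv) both $\p_-\G$ and $\p_+\G$; in each case the hitting measures $\k_\g$ are concentrated on the complementary piece, which is exactly the assertion of the proposition.

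The main obstacle is this last, ``hard'', exclusion of skeleton ends: one needs the lower bounds $c,c'>0$ uniformly over the starting grey vertex (which is where the explicit form of $\wt A$ enters), the uniform escape probability $p_0$ (which comes from the ray reduction together with all negative, resp.\ positive, type rays being mutually isomorphic), and the Borel--Cantelli/strong-Markov bookkeeping along the a.s.\ infinite sequence of grey-visit times. A minor side point is the degenerate case in which $\chi_a(\mu)$ or $(\chi_a+\chi_b)(\mu)$ is a point mass, in which the corresponding type of ray is never entered at all, so the relevant exclusion is trivial.
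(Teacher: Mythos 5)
Your overall route is the same as the paper's: identify the induced walk on the deep parts of the negative (resp.\ positive) type rays with the translation invariant walk on $\Z$ with step law $(-\chi_a)(\mu)$ (resp.\ $(-\chi_a-\chi_b)(\mu)$), and read off escape/non-escape from the signs of the drifts $-\a$ and $-\a-\b$; the paper in fact states the proposition as a direct consequence of exactly this ray reduction. What you add beyond the paper's (implicit) argument is the exclusion of skeleton ends when at least one family of rays is escaping, via the uniform $\wt A$-block plus escape probability plus conditional Borel--Cantelli scheme, and that scheme is sound.

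There is, however, one step you assert without justification, and as justified it would fail: you claim that convergence of $\g.g_n$ to an end of $\p\ov\G$ forces the walk to visit grey vertices infinitely often ``because convergence to a skeleton end means convergence along a geodesic ray of $B$''. The sampled positions of the induced walk do not move along edges of $\G$, let alone along a geodesic of $B$: a single increment $h\in\supp\mu$ can jump over the grey level, so a path converging to a skeleton end need not, on the face of it, ever sit at a vertex of $(0,1)\cap\Zd$ after some time, and your uniform constant $c$ is tied to starting the $\wt A^{M+1}$-block at a grey vertex. The fix is easy and keeps your bookkeeping intact: choose $M'\ge M$ so that every $h\in\supp\mu$ acts as the translation by $C_-(h)$ on $(-\infty,-M)$ and by $C_+(h)$ on $(M',\infty)$, and observe that if the walk eventually avoids the window $W=[-M,M']\cap\Zd$, then from some time on each step is a pure translation on one fixed side, so the walk can neither change sides nor change rays and is trapped in a single ray, whence its limit end lies in $\p_-\G\cup\p_+\G$, not in $\p\ov\G$. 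Hence convergence to a skeleton end forces infinitely many visits to $W$, and from \emph{any} point of $W$ the single element $\wt A^{\,M+M'+1}$ (which lies in $\sgr\mu=\wt F$, so some convolution power charges it) moves the walk below $-M$; with this replacement of ``grey visit times'' by ``$W$-visit times'' your uniform constants $c$, $p_0$ and the Borel--Cantelli argument go through verbatim, and the four cases assemble as you state. (A small side remark: the degenerate situation where $\chi_a(\mu)$ or $(\chi_a+\chi_b)(\mu)$ is a point mass cannot occur at all under $\sgr\mu=\wt F$, since these homomorphisms are surjective onto $\Z$; your dismissal of it via ``the ray is never entered'' is not quite the right reason, but the point is vacuous.)
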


\begin{center}
\psfrag{a}[][]{$\alpha$}
\psfrag{b}[][]{$\beta$}
\psfrag{A}[][]{$\p_-\G$}
\psfrag{B}[][]{$\p_-\G \cup \p_+\G$}
\psfrag{C}[][]{$\p_+\G$}
\psfrag{D}[][]{$\p\ov\G$}
\includegraphics{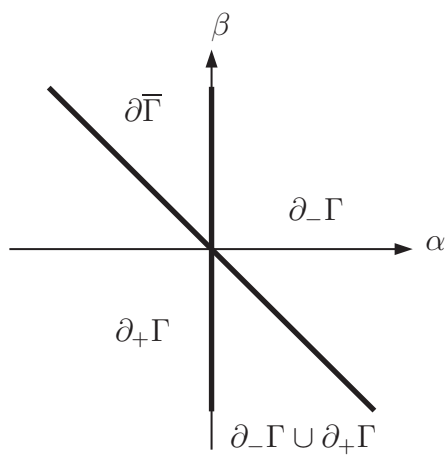}
\captionof{figure}{The domains in the parameter plane $(\a,\b)$ corresponding to different supports of the hitting measures $\k_\g$.} \label{fig:ab}
\end{center}

\bigskip

\prpref{prp:cases} allows us to make more precise the description of the $\mu$-boundary of the random walk $(\wt F,\mu)$ obtained in \thmref{thm:conv2}.

\begin{thm} \label{thm:conv2a}
Under conditions of \thmref{thm:conv2} the arising probability measure $\la$ is concentrated on
$$
\begin{cases}
\Fun(\G,\p_-\G \cup \p_+\G) \;,  & \text{if}\;\a>0, \;\text{and}\; \a+\b<0\;,\\
\Fun(\G,\p_-\G)\;,  & \text{if}\; \a>0, \;\text{and}\; \a+\b\ge 0\;\\
\Fun(\G,\p_+\G)\;,  & \text{if}\; a\le 0, \;\text{and}\; \a+\b<0\;\\
\Fun(\G,\p\ov\G)\;,  & \text{if}\; a\le 0, \;\text{and}\; \a+\b\ge 0\;.
\end{cases}
$$
\end{thm}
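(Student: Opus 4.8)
The plan is to read \thmref{thm:conv2a} off directly from \prpref{prp:cases}. Recall from \thmref{thm:conv2} that $\la$ is the image of the path-space measure $\P$ under the map $\gg\mapsto(\w_\g)_{\g\in\G}$, where $\w_\g=\w_\g(\gg)=\lim_n\g.g_n$. Since $\P=\P_{\d_e}$ we have $g_0=e$, so for each fixed $\g\in\G$ the sequence $(\g.g_n)_{n\ge0}$ is, by \eqref{eq:gg}, a realization of the induced random walk $(\G,\mu)$ \eqref{eq:rwg} issued from $\g$; by \thmref{thm:transall} together with the finiteness of $\supp\mu$ it converges to the end $\w_\g$, whose law under $\P$ is precisely the hitting measure $\k_\g$.

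Now fix one of the four regimes for the pair $(\a,\b)$, and let $E\subseteq\p\G$ be the corresponding component of the decomposition \eqref{eq:union} named in \prpref{prp:cases} (namely $\p_-\G\cup\p_+\G$, $\p_-\G$, $\p_+\G$, or $\p\ov\G$). By \prpref{prp:cases} the measure $\k_\g$ is concentrated on $E$ for \emph{every} $\g\in\G$; equivalently, for each $\g$ the event $\{\gg:\w_\g(\gg)\in E\}$ has full $\P$-measure. Because $\G\cong\Zd$ is countable, the intersection $\bigcap_{\g\in\G}\{\gg:\w_\g(\gg)\in E\}$ still has full $\P$-measure, and on this event the section $(\w_\g)_{\g\in\G}$ takes all of its values in $E$, i.e.\ it lies in $\Fun(\G,E)=E^{\G}\subseteq\Fun(\G,\p\G)$. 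Hence $\la\bigl(\Fun(\G,E)\bigr)=1$, which is exactly the assertion of the theorem in each of the four cases.

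The essential content therefore resides entirely in \prpref{prp:cases}, whose proof rests on two classical facts: that a finitely supported random walk on $\Z$ is transient and escapes to $+\infty$ (resp.\ $-\infty$) precisely when its barycentre is positive (resp.\ negative), applied along the rays via the induced step distributions $(-\chi_a)(\mu)$ on $\Z_-$ and $(-\chi_a-\chi_b)(\mu)$ on $\Z_+$; and that a transient bounded-range random walk on a connected graph converges to an end, combined with the observation that the limit end can lie on a ray of a given type only if the induced one-dimensional walk on that ray has outward drift, and must otherwise lie in $\p\ov\G$. Given \prpref{prp:cases}, the only point specific to \thmref{thm:conv2a} is the passage from these ``fixed-basepoint'' statements to a statement about the whole end field, and for that no uniformity in $\g$ is required — countability of $\G$ suffices. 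The mild bookkeeping to check is merely that the four pairs of sign conditions on $(\a,\b)$ exhaust all possibilities and match the four cases, which is immediate.
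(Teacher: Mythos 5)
Your argument is correct and matches the paper's treatment: the paper states \thmref{thm:conv2a} as an immediate consequence of \prpref{prp:cases}, which is exactly your reduction, with the only extra step being the (trivial) passage from the fixed-basepoint statements $\k_\g(E)=1$ to the full end field via countability of $\G$. Your sketch of why \prpref{prp:cases} holds (drift of the induced walks on the rays governed by the barycentres of $(-\chi_a)(\mu)$ and $(-\chi_a-\chi_b)(\mu)$) also agrees with the paper's discussion.
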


\section{Concluding remarks} \label{s:concl}

\subsection{Relaxing conditions on the step distribution}

Throughout the paper we have used the assumption that the measure $\mu$ on the group $\wt F\cong F$ is finitely supported and strictly non-degenerate. It would be interesting to see, to what extent our results can be carried over to more general measures. [Let us remind once again that amenability of the group $F$ is equivalent to existence of a Liouville symmetric probability measure $\mu$ with $\supp\mu=F$.]

Here one can try to relax both the conditions on \emph{how much the measure $\mu$ is allowed to spread out}:
\begin{gather*}
    \text{finite support} \\
    \downarrow \\
    \text{finite first moment} \\
    \downarrow \\
    \text{finite entropy} \\
    \downarrow \\
    \text{general measures}
\end{gather*}
and the conditions on \emph{how non-degenerate the measure $\mu$ is allowed to be}:
\begin{gather*}
    \sgr\mu=\wt F \\
    \downarrow \\
    \gr\mu=\wt F \\
    \downarrow \\
    \text{$\gr\mu$ is a \emph{non-elementary} subgroup of $\wt F$}\;.
\end{gather*}
The meaning of ``non-elementary'' of course needs to be specified in our context (for instance, cf. Kaimanovich -- Masur \cite{Kaimanovich-Masur96} for the subgroups of the mapping class group, or Kaimanovich \cite{Kaimanovich00a} for the subgroups of groups with hyperbolic properties).

As for the conditions on how much the measure $\mu$ is allowed to spread out, one can expect that an extension to the class of measures with a \emph{finite first moment} should be quite feasible (cf. the case of the lamplighter groups considered by the author \cite{Kaimanovich91} and the case of hyperbolic graphs considered by Kaimanovich -- Woess \cite{Kaimanovich-Woess92}). Notice, however, that, once the finite first moment condition is dropped, the ``hyperbolic'' and the ``lamplighter'' models drastically diverge. Boundary convergence in hyperbolic spaces does not require any moment conditions, see Kaimanovich \cite{Kaimanovich00a}, Maher -- Tiozzo \cite{Maher-Tiozzo14p}, whereas
for the lamplighter groups, due to their amenability, the situation is completely different.

For instance, there are examples of measures with infinite first moment on the lamplighter groups, for which the configurations do not stabilize, but the Poisson boundary is still non-trivial, see Kaimanovich \cite{Kaimanovich83a,Kaimanovich91}. In fact, Erschler proved in \cite{Erschler04} that the Poisson boundary on the lamplighter groups is non-trivial for all step distributions with finite entropy provided the quotient random walk is transient. She also described another family of groups (which she calls \emph{groups of Baumslag type}) with this property. These groups are quite similar to the lamplighter groups, but the difference is that for the Baumslag type groups certain infinitely supported configurations are also allowed.

Outside of the class of measures with finite entropy the situation becomes even more complex. There are examples of measures~$\mu$ on the lamplighter groups, for which the Poisson boundary is trivial in spite of non-triviality of the Poisson boundary for the \textsf{reflected measure} $\check\mu(g)=\mu(g^{-1})$, see Kaimanovich \cite{Kaimanovich83a}. This is impossible for the measures with finite entropy because of the entropy criterion of triviality of the Poisson boundary, as in the finite entropy case the asymptotic entropies of the original and of the reflected random walks are obviously the same.

\subsection{Other subgroups of $PLF(\R)$}

It would be interesting to understand, to what extent our results can be carried over to other subgroups of $PLF(\R)$, for instance, to the group generated by the translation $\g\mapsto\g+1$ and the transformation defined as

$$
\g \mapsto
\begin{cases}
\g \;, & \g\le 0 \;, \\
2\g \;, & \g\ge 0 \;.
\end{cases}
$$
Note that by a result of Brin -- Squier \cite{Brin-Squier85} $PLF(\R)$ (and therefore all its subgroups as well) does not contain non-abelian free subgroups.

As it has been pointed out by the referee, our ``drifting'' technique used in the proof of \thmref{thm:trans} should be applicable to the ``$F$-series'' of the so-called \emph{Thompson--Stein groups} (they are a generalization of Thompson's groups introduced by Stein \cite{Stein92}). In particular, it almost \emph{verbatim} carries over to the groups $F(p)$ introduced by Burillo -- Cleary -- Stein \cite{Burillo-Cleary-Stein01} (for these groups the break points are allowed to be in $\Z[\frac1p]$ for an integer $p>1$). An interesting feature of the family $\{F(p)\}$ is that all these groups embed one into the other, so that they are all either amenable or non-amenable simultaneously.

We are not aware of any explicit description of the Schreier graphs of the canonical action of any of the Thompson--Stein groups other than Savchuk's description of the Schreier graphs for the original Thompson's group $F$ \cite{Savchuk10,Savchuk15}.

\subsection{The lattice of $\mu$-boundaries and the problem of identification of the Poisson boundary}

Although the $\mu$-boundaries constructed in \thmref{thm:conv} and \thmref{thm:conv2} seem to be quite different, currently we do not have any information about their mutual position in the lattice of all $\mu$-boundaries. In particular, \emph{a priori} it is not excluded that both these $\mu$-boundaries might in fact coincide with the full Poisson boundary (which is the maximal element in the lattice of $\mu$-boundaries). If not, what are, for instance, their infimum and supremum? Can one be just a quotient of the other one? And, of course, how are they related to the full Poisson boundary? Will their supremum, one of them, or maybe even both be the Poisson boundary? See author's papers \cite{Kaimanovich96,Kaimanovich00a} for a discussion of the general identification problem for the Poisson boundary.

\subsection{Configuration spaces as $\mu$-boundaries, and generating partitions of the Poisson boundary.}

The $\mu$-boundary from \thmref{thm:conv2} seems to be the first example when a boundary behavior is realized on the space of configurations on a single group orbit in such a way that
the group action consists of just permuting the configurations without changing their point values.

There is a well-known construction in ergodic theory which allows one to realize general group actions as actions on configuration spaces by translations (e.g., see Cornfeld -- Fomin -- Sinai  \cite{Cornfeld-Fomin-Sinai82}). Namely, let $(X,m)$ be a Lebesgue measure space endowed with a measure class preserving action of a countable group $G$. Given an at most countable measurable partition $\xi$, let $X_\xi$ be the corresponding countable quotient space (i.e., the space of the elements of the partition $\xi$), and let $x\mapsto \xi(x)$ be the corresponding quotient map which assigns to any point $x\in X$ the element of the partition $\xi$ which contains $x$. The function
$$
\Phi_x: G\to X_\xi \;,\quad g \mapsto \xi(g^{-1} x) \;,
$$
can be then considered as an $X_\xi$-valued configuration on $G$, and the map $x\mapsto \Phi_x$ is obviously $G$-equivariant if one endows the space of configurations on $G$ with the action \eqref{eq:act0}. If the map $x\mapsto\Phi_x$ separates (mod 0) the points of $X$, i.e., if this map is an isomorphism between the space $(X,m)$ and the space of $X_\xi$-valued configurations on $G$, then the partition $\xi$ is called \textsf{generating}.

Generating partitions for $\Z$-actions are a classical tool of ergodic theory. Moving to more general actions, it is, for example, easy to see that if one takes the partition of the boundary of a free group into the cylinder sets determined by the first letter of the infinite words representing boundary points, then this partition is generating even in the topological category. Therefore, this example provides a symbolic realization of the action of the free group on its Poisson boundary whenever the Poisson boundary can be identified with the space of infinite words (for instance, for the step distributions with a finite first moment, see Kaimanovich \cite{Kaimanovich00a}). However, we are not aware of any work on generating partitions for Poisson boundaries or their quotients in general.

\subsection{Poisson boundary and sections of boundary bundles over Schreier graphs}

The statement of \thmref{thm:gen} on the construction of a non-trivial $\mu$-boundary for a group $G$ from the space of ends of its transient Schreier graph $\G$ is, of course, very well known when one takes for $\G$ just the group $G$ (in which case the action of the group $G$ on itself extends to the space $\p G$, and the limit configurations are equivariant as functions from $G$ to $\p G$). However, it would be interesting to have other examples in the situation when the stabilizers $G_\g,\;\g\in\G$, are sufficiently far from being normal. It is natural to impose the condition that the Schreier graph is determined by a \textsf{core-free subgroup} $H\subset G$ (i.e., that the action of $G$ on the homogeneous space $\G\cong H\backslash G$ is faithful), as otherwise one can always pass to the quotient of $G$ by the normal core of $H$. Note that for Thompson's group $F\cong\wt F$ the latter condition is obviously satisfied as the canonical action of Thompson's group is completely determined by its restriction to the dyadic-rational orbit (cf. \secref{sec:change}).

Instead of the end bundle one can use the \textsf{Poisson bundle} over the graph $\G$, i.e., the map
$\G \times \p_\mu\G \to \G$, where $\p_\mu\G$ denotes the Poisson boundary of the induced random walk $(\G,\mu)$ on the graph $\G$. The space of its sections is the space of configurations $\Fun(\G,\p_\mu\G)$. Let $\bnd=\bnd_{(\G,\mu)}$ denote the \textsf{boundary map} from the path space of the random walk $(\G,\mu)$ to the Poisson boundary $\p_\mu\G$. If $\p_\mu\G$ is non-trivial, then the projection
$$
\gg=(g_n) \mapsto \{\bnd(\g.\gg)\}_{\g\in\G} \;,
$$
where $\gg\mapsto\g.\gg$ is the map \eqref{eq:gg} from the path space of the random walk $(G,\mu)$ to the path space of the induced random walk $(\G,\mu)$, makes $\Fun(\G,\p_\mu\G)$ a non-trivial $\mu$-boundary. It would be interesting to find conditions under which this $\mu$-boundary coincides with the full Poisson boundary of the random walk $(G,\mu)$.

\bibliographystyle{amsalpha}
\bibliography{C:/S/MyTEX/mine}

\providecommand{\bysame}{\leavevmode\hbox to3em{\hrulefill}\thinspace}
\providecommand{\MR}{\relax\ifhmode\unskip\space\fi MR }
\providecommand{\MRhref}[2]{%
  \href{http://www.ams.org/mathscinet-getitem?mr=#1}{#2}
}
\providecommand{\href}[2]{#2}
\begin{thebibliography}{JMBNdlS15}

\bibitem[AAMBV13]{Amir-Angel-MatteBon-Virag13p}
Gideon Amir, Omer Angel, Nicol\'as Matte~Bon, and B\'alint Vir\'ag, \emph{The
  {L}iouville property for groups acting on rooted trees}, arXiv:1307.5652,
  2013.

\bibitem[AAV13]{Amir-Angel-Virag13}
Gideon Amir, Omer Angel, and B{\'a}lint Vir{\'a}g, \emph{Amenability of
  linear-activity automaton groups}, J. Eur. Math. Soc. (JEMS) \textbf{15}
  (2013), no.~3, 705--730. \MR{3085088}

\bibitem[AGS06]{Arzhantseva-Guba-Sapir06}
G.~N. Arzhantseva, V.~S. Guba, and M.~V. Sapir, \emph{Metrics on diagram groups
  and uniform embeddings in a {H}ilbert space}, Comment. Math. Helv.
  \textbf{81} (2006), no.~4, 911--929. \MR{2271228 (2007k:20084)}

\bibitem[BCS01]{Burillo-Cleary-Stein01}
J.~Burillo, S.~Cleary, and M.~I. Stein, \emph{Metrics and embeddings of
  generalizations of {T}hompson's group {$F$}}, Trans. Amer. Math. Soc.
  \textbf{353} (2001), no.~4, 1677--1689. \MR{1806724}

\bibitem[BCW07]{Burillo-Cleary-Wiest07}
Jos{\'e} Burillo, Sean Cleary, and Bert Wiest, \emph{Computational explorations
  in {T}hompson's group {$F$}}, Geometric group theory, Trends Math.,
  Birkh\"auser, Basel, 2007, pp.~21--35. \MR{2395786 (2009c:20071)}

\bibitem[BE11]{Bartholdi-Erschler11}
Laurent Bartholdi and Anna Erschler, \emph{{P}oisson-{F}urstenberg boundary and
  growth of groups}, arXiv:1107.5499, 2011.

\bibitem[BKN10]{Bartholdi-Kaimanovich-Nekrashevych10}
Laurent Bartholdi, Vadim~A. Kaimanovich, and Volodymyr~V. Nekrashevych,
  \emph{On amenability of automata groups}, Duke Math. J. \textbf{154} (2010),
  no.~3, 575--598. \MR{2730578}

\bibitem[BLP77]{Baldi-Lohoue-Peyriere77}
Paolo Baldi, No{\"e}l Lohou{\'e}, and Jacques Peyri{\`e}re, \emph{Sur la
  classification des groupes r\'ecurrents}, C. R. Acad. Sci. Paris S\'er. A-B
  \textbf{285} (1977), no.~16, A1103--A1104. \MR{0518008}

\bibitem[Bri09]{Brieussel09}
J{\'e}r{\'e}mie Brieussel, \emph{Amenability and non-uniform growth of some
  directed automorphism groups of a rooted tree}, Math. Z. \textbf{263} (2009),
  no.~2, 265--293. \MR{2534118 (2010g:43002)}

\bibitem[BS85]{Brin-Squier85}
Matthew~G. Brin and Craig~C. Squier, \emph{Groups of piecewise linear
  homeomorphisms of the real line}, Invent. Math. \textbf{79} (1985), no.~3,
  485--498. \MR{782231 (86h:57033)}

\bibitem[BV05]{Bartholdi-Virag05}
Laurent Bartholdi and B{\'a}lint Vir{\'a}g, \emph{Amenability via random
  walks}, Duke Math. J. \textbf{130} (2005), no.~1, 39--56. \MR{2176547
  (2006h:43001)}

\bibitem[CF11]{Cannon-Floyd11}
J.~W. Cannon and W.~J. Floyd, \emph{What is {$\ldots$} {T}hompson's group?},
  Notices Amer. Math. Soc. \textbf{58} (2011), no.~8, 1112--1113. \MR{2856142}

\bibitem[CFP96]{Cannon-Floyd-Parry96}
J.~W. Cannon, W.~J. Floyd, and W.~R. Parry, \emph{Introductory notes on
  {R}ichard {T}hompson's groups}, Enseign. Math. (2) \textbf{42} (1996),
  no.~3-4, 215--256. \MR{1426438 (98g:20058)}

\bibitem[CFS82]{Cornfeld-Fomin-Sinai82}
I.~P. Cornfeld, S.~V. Fomin, and Ya.~G. Sina{\u\i}, \emph{Ergodic theory},
  Grundlehren der Mathematischen Wissenschaften [Fundamental Principles of
  Mathematical Sciences], vol. 245, Springer-Verlag, New York, 1982, Translated
  from the Russian by A. B. Sosinski\u\i. \MR{832433 (87f:28019)}

\bibitem[Che91]{Chen91}
Mu~Fa Chen, \emph{Comparison theorems for {G}reen functions of {M}arkov
  chains}, Chinese Ann. Math. Ser. B \textbf{12} (1991), no.~3, 237--242, A
  Chinese summary appears in Chinese Ann. Math. Ser. A {{\bf{1}}2} (1991), no.
  3, 521. \MR{1130250}

\bibitem[ERJvR15]{Elder-Rechnitzer-vanRensburg15}
Murray Elder, Andrew Rechnitzer, and Esaias~J. Janse~van Rensburg, \emph{Random
  sampling of trivial words in finitely presented groups}, Exp. Math.
  \textbf{24} (2015), no.~4, 391--409. \MR{3383471}

\bibitem[Ers04a]{Erschler04a}
Anna Erschler, \emph{Boundary behavior for groups of subexponential growth},
  Ann. of Math. (2) \textbf{160} (2004), no.~3, 1183--1210. \MR{2144977
  (2006d:20072)}

\bibitem[Ers04b]{Erschler04}
\bysame, \emph{Liouville property for groups and manifolds}, Invent. Math.
  \textbf{155} (2004), no.~1, 55--80. \MR{2 025 301}

\bibitem[Ers06a]{Erschler06}
\bysame, \emph{Isoperimetry for wreath products of {M}arkov chains and
  multiplicity of selfintersections of random walks}, Probab. Theory Related
  Fields \textbf{136} (2006), no.~4, 560--586. \MR{2257136}

\bibitem[Ers06b]{Erschler06a}
\bysame, \emph{Piecewise automatic groups}, Duke Math. J. \textbf{134} (2006),
  no.~3, 591--613. \MR{2254627}

\bibitem[Ers10]{Erschler10}
\bysame, \emph{Poisson-{F}urstenberg boundaries, large-scale geometry and
  growth of groups}, Proceedings of the {I}nternational {C}ongress of
  {M}athematicians. {V}olume {II}, Hindustan Book Agency, New Delhi, 2010,
  pp.~681--704. \MR{2827814 (2012h:60016)}

\bibitem[Ers11]{Erschler11a}
\bysame, \emph{Poisson-{F}urstenberg boundary of random walks on wreath
  products and free metabelian groups}, Comment. Math. Helv. \textbf{86}
  (2011), no.~1, 113--143. \MR{2745278 (2011k:60013)}

\bibitem[Fur73]{Furstenberg73}
Harry Furstenberg, \emph{Boundary theory and stochastic processes on
  homogeneous spaces}, Harmonic analysis on homogeneous spaces (Proc. Sympos.
  Pure Math., Vol. XXVI, Williams Coll., Williamstown, Mass., 1972), Amer.
  Math. Soc., Providence, R.I., 1973, pp.~193--229. \MR{50 \#4815}

\bibitem[Fur02]{Furman02}
Alex Furman, \emph{Random walks on groups and random transformations}, Handbook
  of dynamical systems, {V}ol.\ 1{A}, North-Holland, Amsterdam, 2002,
  pp.~931--1014. \MR{1928529 (2003j:60065)}

\bibitem[Gou14]{Gournay14}
Antoine Gournay, \emph{The {L}iouville property and {H}ilbertian compression},
  arXiv:1403.1195, 2014.

\bibitem[HHRS15]{Haagerup-Haagerup-RamirezSolano15}
S{\o}ren Haagerup, Uffe Haagerup, and Maria Ramirez-Solano, \emph{A
  computational approach to the {T}hompson group {$F$}}, Internat. J. Algebra
  Comput. \textbf{25} (2015), no.~3, 381--432. \MR{3334642}

\bibitem[HP11]{Haagerup-Picioroaga11}
Uffe Haagerup and Gabriel Picioroaga, \emph{New presentations of {T}hompson's
  groups and applications}, J. Operator Theory \textbf{66} (2011), no.~1,
  217--232. \MR{2806554 (2012f:46099)}

\bibitem[JMBNdlS15]{Juschenko-MatteBon-Monod-delaSalle15p}
Kate Juschenko, Nicol\'as Matte~Bon, Monod Nicolas, and Mikael de~la Salle,
  \emph{Extensive amenability and an application to interval exchanges},
  arXiv:1503.04977, 2015.

\bibitem[Kai83]{Kaimanovich83a}
Vadim~A. Kaimanovich, \emph{Examples of nonabelian discrete groups with
  nontrivial exit boundary}, Zap. Nauchn. Sem. Leningrad. Otdel. Mat. Inst.
  Steklov. (LOMI) \textbf{123} (1983), 167--184, Differential geometry, Lie
  groups and mechanics, V. \MR{697250 (85b:60008)}

\bibitem[Kai85]{Kaimanovich85}
\bysame, \emph{An entropy criterion of maximality for the boundary of random
  walks on discrete groups}, Soviet Math.Dokl. \textbf{31} (1985), 193--197.
  \MR{86m:60025}

\bibitem[Kai89]{Kaimanovich89}
\bysame, \emph{Lyapunov exponents, symmetric spaces and a multiplicative
  ergodic theorem for semisimple {L}ie groups}, J. Soviet Math. \textbf{47}
  (1989), 2387--2398. \MR{89m:22006}

\bibitem[Kai91]{Kaimanovich91}
\bysame, \emph{Poisson boundaries of random walks on discrete solvable groups},
  Probability measures on groups, {X} ({O}berwolfach, 1990), Plenum, New York,
  1991, pp.~205--238. \MR{1178986 (94m:60014)}

\bibitem[Kai92]{Kaimanovich92}
\bysame, \emph{Measure-theoretic boundaries of {M}arkov chains, {$0$}-{$2$}
  laws and entropy}, Harmonic analysis and discrete potential theory (Frascati,
  1991), Plenum, New York, 1992, pp.~145--180. \MR{94h:60099}

\bibitem[Kai94]{Kaimanovich94a}
\bysame, \emph{The {P}oisson boundary of hyperbolic groups}, C. R. Acad. Sci.
  Paris S\'er. I Math. \textbf{318} (1994), no.~1, 59--64. \MR{1260536
  (94j:60142)}

\bibitem[Kai95]{Kaimanovich95}
\bysame, \emph{The {P}oisson boundary of covering {M}arkov operators}, Israel
  J. Math. \textbf{89} (1995), no.~1-3, 77--134. \MR{96k:60194}

\bibitem[Kai96]{Kaimanovich96}
\bysame, \emph{Boundaries of invariant {M}arkov operators: the identification
  problem}, Ergodic theory of ${\bf Z}\sp d$ actions (Warwick, 1993--1994),
  London Math. Soc. Lecture Note Ser., vol. 228, Cambridge Univ. Press,
  Cambridge, 1996, pp.~127--176. \MR{97j:31008}

\bibitem[Kai00]{Kaimanovich00a}
\bysame, \emph{The {P}oisson formula for groups with hyperbolic properties},
  Ann. of Math. (2) \textbf{152} (2000), no.~3, 659--692. \MR{1815698
  (2002d:60064)}

\bibitem[Kai04a]{Kaimanovich04}
\bysame, \emph{Boundary amenability of hyperbolic spaces}, Discrete geometric
  analysis, Contemp. Math., vol. 347, Amer. Math. Soc., Providence, RI, 2004,
  pp.~83--111. \MR{2077032 (2005j:20051)}

\bibitem[Kai04b]{Kaimanovich04p}
\bysame, \emph{Boundary behaviour of the {T}hompson group}, presentation
  slides, 2004.

\bibitem[Kai05]{Kaimanovich05a}
\bysame, \emph{Amenability and the {L}iouville property}, Israel J. Math.
  \textbf{149} (2005), 45--85, Probability in mathematics. \MR{2191210
  (2007c:43001)}

\bibitem[KL07]{Karlsson-Ledrappier07}
Anders Karlsson and Fran{\c{c}}ois Ledrappier, \emph{Linear drift and {P}oisson
  boundary for random walks}, Pure Appl. Math. Q. \textbf{3} (2007), no.~4,
  part 1, 1027--1036. \MR{2402595}

\bibitem[KM96]{Kaimanovich-Masur96}
Vadim~A. Kaimanovich and Howard Masur, \emph{The {P}oisson boundary of the
  mapping class group}, Invent. Math. \textbf{125} (1996), no.~2, 221--264.
  \MR{1395719 (97m:32033)}

\bibitem[KM99]{Karlsson-Margulis99}
Anders Karlsson and Gregory~A. Margulis, \emph{A multiplicative ergodic theorem
  and nonpositively curved spaces}, Comm. Math. Phys. \textbf{208} (1999),
  no.~1, 107--123. \MR{1729880 (2000m:37031)}

\bibitem[KV83]{Kaimanovich-Vershik83}
V.~A. Kaimanovich and A.~M. Vershik, \emph{Random walks on discrete groups:
  boundary and entropy}, Ann. Probab. \textbf{11} (1983), no.~3, 457--490.
  \MR{85d:60024}

\bibitem[KV15]{Kotowski-Virag15}
Miha{\l} Kotowski and B\'{a}lint Vir\'{a}g, \emph{Non-{L}iouville groups with
  return probability exponent at most 1/2}, Electron. J. Probab. \textbf{20}
  (2015), no. 12, 1--12 (electronic).

\bibitem[KW92]{Kaimanovich-Woess92}
Vadim~A. Kaimanovich and Wolfgang Woess, \emph{The {D}irichlet problem at
  infinity for random walks on graphs with a strong isoperimetric inequality},
  Probab. Theory Related Fields \textbf{91} (1992), no.~3-4, 445--466.
  \MR{1151805 (93e:60139)}

\bibitem[KW07]{Karlsson-Woess07}
Anders Karlsson and Wolfgang Woess, \emph{The {P}oisson boundary of lamplighter
  random walks on trees}, Geom. Dedicata \textbf{124} (2007), 95--107.
  \MR{2318539 (2009b:60246)}

\bibitem[Lio08]{Liousse08}
Isabelle Liousse, \emph{Rotation numbers in {T}hompson-{S}tein groups and
  applications}, Geom. Dedicata \textbf{131} (2008), 49--71. \MR{2369191}

\bibitem[LP15]{Lyons-Peres15}
Russell Lyons and Yuval Peres, \emph{{P}oisson boundaries of lamplighter
  groups: proof of the {K}aimanovich-{V}ershik conjecture}, arXiv:1508.01845,
  2015.

\bibitem[Lyo87]{Lyons87}
Terry Lyons, \emph{Instability of the {L}iouville property for quasi-isometric
  {R}iemannian manifolds and reversible {M}arkov chains}, J. Differential Geom.
  \textbf{26} (1987), no.~1, 33--66. \MR{892030 (88k:31012)}

\bibitem[MB14]{MatteBon14}
Nicol{\'a}s Matte~Bon, \emph{Subshifts with slow complexity and simple groups
  with the {L}iouville property}, Geom. Funct. Anal. \textbf{24} (2014), no.~5,
  1637--1659. \MR{3261637}

\bibitem[Mis15]{Mishchenko15}
Pavlo Mishchenko, \emph{Boundary of the action of {T}hompson's group {$F$} on
  dyadic numbers}, arXiv:1512.03083, 2015.

\bibitem[Moo13]{Moore13}
Justin~Tatch Moore, \emph{Fast growth in the {F}\o lner function for
  {T}hompson's group {$F$}}, Groups Geom. Dyn. \textbf{7} (2013), no.~3,
  633--651. \MR{3095713}

\bibitem[MT14]{Maher-Tiozzo14p}
Joseph Maher and Giulio Tiozzo, \emph{Random walks on weakly hyperbolic
  groups}, arXiv:1410.4173, 2014.

\bibitem[NP08]{Naor-Peres08}
Assaf Naor and Yuval Peres, \emph{Embeddings of discrete groups and the speed
  of random walks}, Int. Math. Res. Not. IMRN (2008), Art. ID rnn 076, 34.
  \MR{2439557}

\bibitem[Ros81]{Rosenblatt81}
Joseph Rosenblatt, \emph{Ergodic and mixing random walks on locally compact
  groups}, Math. Ann. \textbf{257} (1981), no.~1, 31--42. \MR{83f:43002}

\bibitem[Sav10a]{Sava10}
Ecaterina Sava, \emph{A note on the {P}oisson boundary of lamplighter random
  walks}, Monatsh. Math. \textbf{159} (2010), no.~4, 379--396. \MR{2600904
  (2011b:60308)}

\bibitem[Sav10b]{Savchuk10}
Dmytro Savchuk, \emph{Some graphs related to {T}hompson's group {$F$}},
  Combinatorial and geometric group theory, Trends Math., Birkh\"auser/Springer
  Basel AG, Basel, 2010, pp.~279--296. \MR{2744025 (2012b:20104)}

\bibitem[Sav15]{Savchuk15}
\bysame, \emph{Schreier graphs of actions of {T}hompson's group {$F$} on the
  unit interval and on the {C}antor set}, Geom. Dedicata \textbf{175} (2015),
  355--372. \MR{3323646}

\bibitem[Ste92]{Stein92}
Melanie Stein, \emph{Groups of piecewise linear homeomorphisms}, Trans. Amer.
  Math. Soc. \textbf{332} (1992), no.~2, 477--514. \MR{1094555}

\bibitem[Var83]{Varopoulos83}
Nicolas~Th. Varopoulos, \emph{Brownian motion and transient groups}, Ann. Inst.
  Fourier (Grenoble) \textbf{33} (1983), no.~2, 241--261. \MR{699497}

\bibitem[Woe94]{Woess94}
Wolfgang Woess, \emph{Topological groups and recurrence of quasitransitive
  graphs}, Rend. Sem. Mat. Fis. Milano \textbf{64} (1994), 185--213 (1996).
  \MR{1397471 (97i:60092)}

\bibitem[Woe00]{Woess00}
\bysame, \emph{Random walks on infinite graphs and groups}, Cambridge Tracts in
  Mathematics, vol. 138, Cambridge University Press, Cambridge, 2000.
  \MR{2001k:60006}

\end{thebibliography}

\end{document}